\newlength{\standardunitlength}
\newtheorem{prop}{Proposition}[section]
\newtheorem{definition}[prop]{Definition}
\newtheorem{cor}[prop]{Corollary}
\newtheorem{theorem}[prop]{Theorem}
\newtheorem{remark}[prop]{Remark}
\begin{document}

\title [The distribution of fixed vectors] {On the distribution of the number of fixed vectors
for the finite classical groups}

\author{Jason Fulman}
\address{Department of Mathematics\\
University of Southern California\\
Los Angeles, CA 90089-2532} \email{fulman@usc.edu}

\author{Dennis Stanton}
\address{Department of Mathematics\\
University of Minnesota\\
Minneapolis, MN 55455} \email{stanton@math.umn.edu}

\keywords{finite classical group, fixed space}

\date{May 23, 2015}

\thanks{{\it 2010 AMS Subject Classification}: 20G40, 05E15}

\begin{abstract} Motivated by analogous results for the symmetric group and
compact Lie groups, we study the distribution of the number of fixed vectors of
a random element of a finite classical group. We determine the limiting
moments of these distributions, and find exactly how large the rank of the
group has to be in order for the moment to stabilize to its limiting value.
The proofs require a subtle use of some q-series identities. We also point
out connections with orthogonal polynomials.
\end{abstract}

\maketitle

\section{Introduction}

In an influential and widely cited paper \cite{DS}, Diaconis and Shahshahani study the fixed
points of random permutations, and the trace of elements of compact Lie groups.
The following two theorems are special cases of their results:

\begin{theorem} \label{poisson} Let $\pi$ be uniformly distributed on the symmetric
group $S_n$, and let $Z(\pi)$ denote the number of fixed points of $\pi$. Then for any
natural number $j$, and $n \geq j$, the expected value of $Z^j$ is equal to
the Bell number $B_j$, the number of partitions of a set of size $j$. Note that
$B_j$ is the $j$th moment of a Poisson distribution with mean $1$, so that
$Z$ approaches a Poisson distribution with mean $1$ as $n \rightarrow \infty$.
\end{theorem}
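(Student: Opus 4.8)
The plan is to compute the \emph{falling factorial moments} of $Z$ exactly, and then convert them into ordinary moments by means of the Stirling numbers of the second kind. To begin, I would write $Z(\pi)=\sum_{i=1}^n \mathbf{1}[\pi(i)=i]$ as a sum of indicator functions. For a natural number $r\le n$, the $r$-th falling factorial $(Z)_r=Z(Z-1)\cdots(Z-r+1)$ counts ordered $r$-tuples of \emph{distinct} fixed points, so that
\[
(Z)_r=\sum_{\substack{(i_1,\dots,i_r)\\ \text{distinct}}}\mathbf{1}[\pi(i_1)=i_1,\dots,\pi(i_r)=i_r].
\]
The key computation is then elementary: for any fixed $r$-tuple of distinct points, the probability that $\pi$ fixes all of them is $(n-r)!/n!$, since $\pi$ may act as an arbitrary permutation of the remaining $n-r$ points. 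There are $n!/(n-r)!$ ordered $r$-tuples of distinct points, so by linearity of expectation
\[
E[(Z)_r]=\frac{n!}{(n-r)!}\cdot\frac{(n-r)!}{n!}=1\qquad(r\le n).
\]
Thus every falling factorial moment of $Z$ is exactly $1$, provided its order does not exceed $n$.

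Next I would invoke the classical Stirling identity $x^j=\sum_{r=0}^j S(j,r)\,(x)_r$, where $S(j,r)$ denotes the number of partitions of a $j$-element set into $r$ nonempty blocks. Taking expectations and using $E[(Z)_r]=1$ for all $r\le j\le n$ gives
\[
E[Z^j]=\sum_{r=0}^j S(j,r)\,E[(Z)_r]=\sum_{r=0}^j S(j,r)=B_j,
\]
since summing $S(j,r)$ over all $r$ counts every set partition of a $j$-element set, which is precisely the Bell number. The hypothesis $n\ge j$ enters exactly here: it is what guarantees that all the falling factorial moments appearing in the Stirling expansion have already stabilized to $1$.

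I do not expect a serious obstacle; the only point demanding care is the range of validity. The exact equality $E[(Z)_r]=1$ breaks down once $r>n$, because the count of distinct $r$-tuples collapses, and this is the reason the hypothesis $n\ge j$ cannot be dropped if one wants the moment to equal $B_j$ on the nose. Finally, to justify the Poisson interpretation, I would recall Dobinski's formula $B_j=e^{-1}\sum_{k\ge 0}k^j/k!$, which exhibits $B_j$ as the $j$-th moment of a Poisson random variable of mean $1$. Since the Poisson distribution is determined by its moments, the convergence $E[Z^j]\to B_j$ for each fixed $j$ (indeed, equality for $n\ge j$) yields convergence of $Z$ in distribution to Poisson$(1)$ as $n\to\infty$.
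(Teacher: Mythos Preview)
Your argument is correct in every step: the computation $E[(Z)_r]=1$ for $r\le n$, the Stirling expansion $x^j=\sum_{r}S(j,r)(x)_r$, the identification $\sum_r S(j,r)=B_j$, and the appeal to Dobinski's formula together with determinacy of the Poisson distribution by its moments.

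There is, however, nothing in the paper to compare it to: Theorem~\ref{poisson} is quoted in the introduction as a known result of Diaconis and Shahshahani \cite{DS}, purely as motivation for the finite-classical-group analogues, and the paper supplies no proof of its own.

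It is still worth remarking that the paper's \emph{second} proof of Theorem~\ref{GLmom} (the $GL(n,q)$ analogue), via Burnside's lemma, specializes verbatim to $S_n$ and gives the other standard route to your result: let $S_n$ act diagonally on $\{1,\dots,n\}^j$; by Burnside, $E[Z^j]$ equals the number of orbits, and for $n\ge j$ an orbit is determined precisely by the set partition of $\{1,\dots,j\}$ recording which coordinates coincide, yielding $B_j$. Your falling-factorial approach has the virtue of pinpointing exactly where $n\ge j$ enters (so that every $E[(Z)_r]$ with $r\le j$ has stabilized to $1$); the orbit-counting argument, on the other hand, is the one that generalizes cleanly to the $q$-setting treated in the body of the paper.
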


\begin{theorem} \label{normal} Let $g$ be chosen from the Haar measure of the
orthogonal group $O(n,R)$, and let $Z(g)$ denote the trace of $G$. Then for any
natural number $j$, and $n \geq j$, the expected value of $Z^j$ is equal to
$0$ if $j$ is odd, and to $(j-1)(j-3) \cdots 1$ if $j$ is even. Note that
these moments are the same as the $j$th moment of a standard normal random
variable, so that $Z$ approaches a standard normal random variable as
$n \rightarrow \infty$.
\end{theorem}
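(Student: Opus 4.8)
The plan is to identify the $j$th moment with the dimension of a space of invariants and then invoke classical invariant theory for the orthogonal group. First I would record the general principle that for a compact group $G$ with normalized Haar measure and a finite-dimensional representation $(\rho,W)$, the operator $P=\int_G \rho(g)\,dg$ is the projection onto the space $W^G$ of $G$-fixed vectors, so that $\int_G \chi_W(g)\,dg = \operatorname{trace}(P)=\dim W^G$. Applying this to $W=V^{\otimes j}$, where $V=\mathbb{R}^n$ is the defining representation of $\OO(n,\mathbb{R})$ whose character is $Z(g)=\operatorname{trace}(g)$, and using $\chi_{V^{\otimes j}}=(\operatorname{trace})^j$, gives
\[
E[Z^j] = \int_{\OO(n,\mathbb{R})} (\operatorname{trace} g)^j \, dg = \dim (V^{\otimes j})^{\OO(n,\mathbb{R})}.
\]
This reduces the analytic problem to a purely algebraic one.

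The odd case is then immediate by symmetry: since $-I\in\OO(n,\mathbb{R})$ and Haar measure is invariant under $g\mapsto -g$, we have $E[Z^j]=E[(-Z)^j]=(-1)^jE[Z^j]$, which forces $E[Z^j]=0$ whenever $j$ is odd. Equivalently, $V^{\otimes j}$ carries no invariants in this case because one cannot pair up an odd number of tensor slots, as the next step makes precise.

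For $j$ even I would apply the first fundamental theorem of invariant theory for the orthogonal group, which asserts that $(V^{\otimes j})^{\OO(n,\mathbb{R})}$ is spanned by the contraction tensors $\theta_\pi$ indexed by the perfect matchings $\pi$ of $\{1,\dots,j\}$, where $\theta_\pi=\sum \prod_{(a,b)\in\pi}\delta_{i_a i_b}\, e_{i_1}\otimes\cdots\otimes e_{i_j}$ is built from the invariant quadratic form. The number of such matchings is exactly $(j-1)(j-3)\cdots 3\cdot 1$, which already bounds the dimension, and hence the moment, from above by the claimed value.

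It remains to prove that these $(j-1)(j-3)\cdots 1$ matching tensors are linearly independent once $n\geq j$; this is the crux of the argument and is precisely where the stabilization phenomenon enters. I would analyze the Gram matrix of the $\theta_\pi$ under the standard inner product on $V^{\otimes j}$, whose entries are $\langle \theta_\pi,\theta_\sigma\rangle = n^{c(\pi,\sigma)}$, where $c(\pi,\sigma)$ is the number of cycles of the $2$-regular graph $\pi\cup\sigma$, and show this matrix is nonsingular for $n\geq j$ — equivalently, that the relations predicted by the second fundamental theorem, arising from antisymmetrization over $n+1$ indices, are still vacuous in this range. The main obstacle is exactly this nondegeneracy estimate. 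Granting it, the spanning set becomes a basis, so $\dim (V^{\otimes j})^{\OO(n,\mathbb{R})}=(j-1)(j-3)\cdots 1$; combined with the vanishing of the odd moments this yields the stated Gaussian moments, and letting $n\to\infty$ gives convergence to a standard normal by the method of moments.
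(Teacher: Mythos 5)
The paper itself gives no proof of Theorem \ref{normal}: it is quoted from Diaconis and Shahshahani \cite{DS} purely as motivation for the finite-field analogues, so there is no internal argument to compare yours against. Judged on its own terms, your strategy is the standard and correct one (and is essentially the route of \cite{DS}): identify $E[Z^j]$ with $\dim (V^{\otimes j})^{O(n,\mathbb{R})}$ via the averaging projection, kill the odd moments using the central element $-I$, and count invariants of the even tensor powers by the first fundamental theorem, which produces the $(j-1)(j-3)\cdots 1$ perfect-matching contractions $\theta_\pi$.

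The one place I would push back is the step you explicitly ``grant'': the linear independence of the $\theta_\pi$ for $n\geq j$. As written this is a genuine gap, since it is exactly the inequality that makes the moment equal to, rather than merely bounded by, $(j-1)!!$. But you have also overestimated its difficulty: there is no need to prove nonsingularity of the Gram matrix $\bigl(n^{c(\pi,\sigma)}\bigr)$ (whose exact determinant is a nontrivial theorem of Hanlon and Wales), nor to invoke the second fundamental theorem. A direct separating-vector argument suffices. Regard $\theta_\pi$ as the multilinear functional
\[
\theta_\pi(v_1,\dots,v_j)=\prod_{(a,b)\in\pi}\langle v_a,v_b\rangle .
\]
Given a matching $\sigma$ with pairs $p_1,\dots,p_{j/2}$, set $v_a=e_i$ whenever $a\in p_i$; this uses only $j/2\leq n$ orthonormal vectors. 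Then $\theta_\sigma$ evaluates to $1$, while any $\pi\neq\sigma$ must join two indices lying in distinct $\sigma$-pairs and hence $\theta_\pi$ evaluates to $0$. Any relation $\sum_\pi c_\pi\theta_\pi=0$ therefore forces every $c_\sigma=0$, giving independence already for $n\geq j/2$, and in particular for $n\geq j$. With this inserted your proof is complete (and in fact shows the even moments stabilize at the sharper threshold $n\geq j/2$); the final passage to the normal limit by the method of moments is then routine.
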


It is natural to seek analogs of these results for the finite classical groups,
and the main purpose of this paper is to provide such analogs. For example
if $Z$ denotes the number of fixed vectors of a random element of $GL(n,q)$,
then for all natural numbers $j$ and $n \geq j$, the expected value of $Z^j$
turns out to be equal to the $j$th Galois number, that is the number of subspaces of a
$j$-dimensional vector space over the finite field $F_q$. Moreover as $n
\rightarrow \infty$, the chance that $Z=q^k$ approaches
\begin{equation} \label{fir} \prod_{r \geq 1} \left( 1 - \frac{1}{q^r} \right)
\frac{1}{q^{k^2} (1-1/q)^2 (1-1/q^2)^2 \cdots (1-1/q^k)^2} .\end{equation} This
limiting distribution can be thought of as a $q$-analogue of the Poisson distribution.

The starting point of our work is a beautiful, 80 page paper of Rudvalis and Shinoda
\cite{RS}, which was written in 1988 and unfortunately was never published.
They use Mobius inversion and a very heavy dose of combinatorics to determine,
for each finite classical group $G$, and for each integer $k$, the probability that the fixed space of
a random element of $G$ is $k$-dimensional. For example, they show that the chance that a random
element of $GL(n,q)$ has a $k$-dimensional fixed space is equal to:
\begin{equation} \label{fort} \frac{1}{|GL(k,q)|} \sum_{i=0}^{n-k} \frac{(-1)^i q^{{i \choose 2}}}{q^{ki} |GL(i,q)|}.
\end{equation} They also find formulae for limiting distributions such as \eqref{fir}.

The main purpose of the current paper is to use explicit formulas such as \eqref{fort} to study
the moments of the distribution of fixed vectors of random elements of finite classical
groups. Looking at \eqref{fort}, it is not obvious that it defines a probability distribution,
or how to compute its moments. We give a unified approach to such matters, for all finite
classical groups (general linear, unitary, symplectic, orthogonal) in both odd and even
characteristic. Our proofs are not difficult to follow, but did take us a while to discover.
Other proofs of some of our results can be found in unpublished thesis work \cite{F0} of the
first named author, but the approach here is more unified and gives sharp results in all
cases.

To close the introduction, we mention three reasons why our results may be of interest.
First, some researchers in number theory study ``Cohen-Lenstra heuristics'', and need
information about the distribution of fixed vectors of random elements of finite classical
groups; see \cite{W} for $GL(n,q)$ and \cite{Ma} for the case of finite symplectic groups.
Second, in the case of the symmetric groups, stability of moments of fixed points
(and more generally $i$-cycles), has applications to representation stability in
cohomology and asymptotics for families of varieties over finite fields; see Section
3.4 of \cite{CEF} for details. We are optimistic that our results for finite classical
groups might have similar applications. Third, the moment calculations of Diaconis
and Shahshahani \cite{DS} for compact Lie groups are celebrated in the random matrix
community; there are various other approaches to their work \cite{St}, \cite{PV},
as well as applications of their moment calculations to studying linear functionals of eigenvalues of random
matrices \cite{DE}, \cite{J}. Rains \cite{Ra} connects moments of traces in compact
Lie groups with longest increasing subsequence problems. It is reasonable to hope that the
study of moments for finite classical groups may also be fruitful.

The organization of this paper is as follows. Section \ref{RSreview} recalls formulae of
Rudvalis and Shinoda \cite{RS}. These explicit formulae are crucial to approach, and since
their 1988 preprint never appeared, we are forced to record some of their results. In any
case, we do get regular requests for a copy of the preprint \cite{RS}, so recording
these formulae should be helpful to other mathematicians. Section \ref{identities} collects
some $q$-series identities which we will use. These allow us to treat all the finite
classical groups in a unified way. Section \ref{main} contains our main results:
the exact determination of how large the rank of the group has to be in order for the
moments to stabilize to their limiting values (which we also determine). Section
\ref{orthog} uses orthogonal polynomials to give another calculation of the limiting
values of the moments.

\section{Results of Rudvalis and Shinoda} \label{RSreview}

The purpose of this section is to recall some results of Rudvalis and Shinoda,
dating back to 1988.

We begin with $GL(n,q)$. Recall that $|GL(n,q)|=q^{{n \choose 2}} \prod_{i=1}^n (q^i-1)$.

\begin{theorem}
\label{RSGL}

\begin{enumerate}

\item The chance that an element of $GL(n,q)$ has a $k$-dimensional fixed space is
equal to \[ \frac{1}{|GL(k,q)|} \sum_{i=0}^{n-k} \frac{(-1)^i q^{{i \choose 2}}} {q^{ki} |GL(i,q)|} .\]

\item For $k$ fixed, the $n \rightarrow \infty$ limiting proportion of elements of $GL(n,q)$ with a $k$-dimensional
fixed space is equal to \[ \prod_{r \geq 1} \left( 1 - \frac{1}{q^r} \right) \frac{1}{q^{k^2} \left( 1-1/q \right)^2 \left( 1-1/q^2 \right)^2 \cdots \left( 1-1/q^k \right)^2} .\]

\end{enumerate}

\end{theorem}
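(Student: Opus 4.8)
The plan is to count directly and then invert. Write $\mathrm{Fix}(g) = \ker(g-1)$ for the fixed space and let $N_k$ be the number of $g \in \GL(n,q)$ with $\dim \mathrm{Fix}(g) = k$, so that the desired probability is $N_k/|\GL(n,q)|$. The key observation is that the number of $g$ fixing a given $j$-dimensional subspace pointwise depends only on $j$: choosing a basis adapted to that subspace, such a $g$ has its first $j$ columns equal to the standard basis vectors and is otherwise an arbitrary invertible completion, giving $q^{j(n-j)}\,|\GL(n-j,q)|$ elements. This is the place where the transitivity of $\GL(n,q)$ on $j$-dimensional subspaces is used.

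Next I would set up a Gaussian-binomial relation by double counting the pairs $(g,V)$ in which $V$ is a $k$-dimensional subspace with $V \subseteq \mathrm{Fix}(g)$. Counting by $V$ first gives $\binom{n}{k}_q\, q^{k(n-k)}\,|\GL(n-k,q)|$, while counting by $g$ first gives $\sum_{m \ge k}\binom{m}{k}_q N_m$, since a fixed space of dimension $m$ contains exactly $\binom{m}{k}_q$ subspaces of dimension $k$. Thus
\[
\binom{n}{k}_q\, q^{k(n-k)}\,|\GL(n-k,q)| \;=\; \sum_{m\ge k}\binom{m}{k}_q N_m .
\]
This is an upper-triangular system, which I would invert by the standard Gaussian ($q$-binomial) inversion formula (a clean substitute for the Möbius inversion used by Rudvalis and Shinoda), obtaining
\[
N_k \;=\; \sum_{m \ge k}(-1)^{m-k} q^{\binom{m-k}{2}}\binom{m}{k}_q\binom{n}{m}_q\, q^{m(n-m)}\,|\GL(n-m,q)| .
\]
Setting $m = k+i$ and cancelling the common $q$-factorial $\prod_{j=1}^{n-k-i}(q^j-1)$ leaves precisely the formula of part (1). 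The one genuinely fiddly point is the $q$-exponent bookkeeping: writing $|\GL(m,q)| = q^{\binom{m}{2}}\prod_{j=1}^m(q^j-1)$, one must check that the accumulated power $\binom{i}{2} + (k+i)(n-k-i) + \binom{n-k-i}{2} - \binom{n}{2}$ collapses to $-\binom{k}{2} - ki$, which is exactly what converts the expression into the $1/(q^{ki}|\GL(k,q)|\,|\GL(i,q)|)$ shape.

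For part (2) I would let $n \to \infty$ in the formula of part (1); the sum converges absolutely and termwise because its $i$th term decays like $q^{-\binom{i}{2}}$. Writing $|\GL(i,q)| = q^{\binom{i}{2}}\prod_{j=1}^i(q^j-1)$ cancels the factor $q^{\binom{i}{2}}$, and setting $u = 1/q$ the $i$th term becomes $(-1)^i u^{\binom{i}{2}}(u^{k+1})^i/(u;u)_i$, where $(u;u)_i = \prod_{j=1}^i(1-u^j)$. The limiting sum is then the specialization of Euler's identity $\sum_{i\ge 0}(-1)^i q^{\binom{i}{2}} z^i/(q;q)_i = (z;q)_\infty$ at base $u$ and $z = u^{k+1}$, which gives $\prod_{s\ge k+1}(1-q^{-s})$. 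Dividing by $|\GL(k,q)| = q^{k^2}\prod_{j=1}^k(1-q^{-j})$ and rewriting $\prod_{s\ge k+1}(1-q^{-s}) = \prod_{s\ge 1}(1-q^{-s})\big/\prod_{j=1}^k(1-q^{-j})$ yields the stated product.

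The main obstacle is bookkeeping rather than conceptual: first correctly inverting the upper-triangular Gaussian system, and then verifying the $q$-power identity $\binom{i}{2} + (k+i)(n-k-i) + \binom{n-k-i}{2} - \binom{n}{2} = -\binom{k}{2} - ki$. Once part (1) is in hand in this closed form, the passage to the limit in part (2) is a clean appeal to Euler's $q$-exponential identity together with the elementary evaluation $|\GL(k,q)| = q^{k^2}\prod_{j=1}^k(1-q^{-j})$.
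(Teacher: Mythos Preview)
Your argument is correct. The paper itself does not give a proof of this theorem; it simply records the result of Rudvalis--Shinoda, remarking that their proof of part~(1) ``used Mobius inversion and delicate combinatorics'' and that part~(2) ``follows from part~1 \dots and an identity of Euler,'' with an alternative reference to \cite{F1}. Your approach is exactly in this spirit: the double-count of pairs $(g,V)$ together with $q$-binomial inversion \emph{is} M\"obius inversion on the subspace lattice (restricted to functions depending only on dimension), so what you call ``a clean substitute'' is in fact the same mechanism, packaged efficiently. Your exponent identity $\binom{i}{2}+(k+i)(n-k-i)+\binom{n-k-i}{2}-\binom{n}{2}=-\binom{k}{2}-ki$ checks out (it is equivalent to $\binom{a+b}{2}=\binom{a}{2}+\binom{b}{2}+ab$ with $a=k+i$, $b=n-k-i$), and your derivation of part~(2) via Euler's identity $\sum_{i\ge 0}(-1)^i u^{\binom{i}{2}}z^i/(u;u)_i=(z;u)_\infty$ with $u=1/q$, $z=u^{k+1}$ is precisely the ``identity of Euler'' alluded to in the paper's remark.
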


{\it Remarks:}
\begin{enumerate}
\item Part 2 of Theorem \ref{RSGL} follows from part 1 of Theorem \ref{RSGL} and an identity of Euler.

\item The proof of part 1 of Theorem \ref{RSGL} used Mobius inversion and delicate combinatorics. For a different proof,
the reader can consult \cite{F1}.

\end{enumerate}

Next we treat $U(n,q)$. Recall that $|U(n,q)|= q^{{n \choose 2}} \prod_{i=1}^n (q^i-(-1)^i),$ and that we view $U(n,q)$ as a subgroup of $GL(n,q^2)$.

\begin{theorem} \label{RSU}

\begin{enumerate}

\item The chance that an element of $U(n,q)$ has a $k$-dimensional fixed space is
equal to

\[ \frac{1}{|U(k,q)|} \sum_{i=0}^{n-k} \frac{(-1)^i (-q)^{{i \choose 2}}}{(-q)^{ki} |U(i,q)|}.\]

\item For $k$ fixed, the $n \rightarrow \infty$ limiting proportion of elements of $U(n,q)$ with a $k$-dimensional
fixed space is equal to \[ \prod_{r \geq 0} \left( 1+\frac{1}{q^{2r+1}} \right)^{-1} \frac{1}{q^{k^2} \left( 1-1/q^2 \right) \left( 1-1/q^4 \right) \cdots \left( 1-1/q^{2k} \right)} .\]

\end{enumerate}

\end{theorem}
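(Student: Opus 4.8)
The plan is to prove part 1 first---the combinatorial heart, due to Rudvalis and Shinoda---and then to deduce part 2 as an $n\to\infty$ limit, exactly as in the remark following Theorem \ref{RSGL}. Throughout I work with the natural module $V=F_{q^2}^n$ and set $d(g)=\dim_{F_{q^2}}\ker(g-1)$, so that the fixed space of $g$ has dimension $d(g)$ and the quantity to compute is $N_k:=|\{g\in U(n,q):d(g)=k\}|$, whose proportion is $N_k/|U(n,q)|$.

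For part 1 I would count the pairs $(g,W)$ with $W$ a $j$-dimensional $F_{q^2}$-subspace contained in the fixed space of $g$. Summing over $g$ first gives $M_j:=\sum_g\binom{d(g)}{j}_{q^2}=\sum_{k\ge j}\binom{k}{j}_{q^2}N_k$, while summing over $W$ first gives $M_j=\sum_{\dim W=j}|\{g\in U(n,q):g|_W=\mathrm{id}\}|$. Once $M_j$ is known, $q$-binomial (M\"obius) inversion on the subspace lattice yields $N_k=\sum_{j\ge k}(-1)^{j-k}(q^2)^{\binom{j-k}{2}}\binom{j}{k}_{q^2}M_j$, and substituting and reindexing should produce the claimed alternating sum. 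The main obstacle is evaluating $M_j$: unlike the $GL$ case, the pointwise stabilizer $|\{g:g|_W=\mathrm{id}\}|$ depends on the isometry type of $W$, because a fixed space of a unitary transformation need not be non-degenerate (a unitary transvection already has an isotropic fixed line). By Witt's theorem the $U(n,q)$-orbits on $j$-subspaces are indexed by the dimension $r$ of the radical $W\cap W^{\perp}$, so one must sum the stabilizer orders against the number of subspaces of each radical dimension. This is precisely the delicate combinatorics of Rudvalis and Shinoda, and I expect the resulting $q$-series to collapse to the stated formula.

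For part 2 I would pass to the limit in part 1 and evaluate the series by Euler's identities. The key bookkeeping step is to rewrite the group order in $q$-Pochhammer form: writing $(x;x)_i=\prod_{\ell=1}^{i}(1-x^\ell)$ and using $q^\ell-(-1)^\ell=(-1)^\ell((-q)^\ell-1)$ together with $\binom{i}{2}+\binom{i+1}{2}=i^2$, one finds $|U(i,q)|=(-q)^{\binom{i}{2}}\,((-q);(-q))_i$. Each summand of the limiting series then reduces to $(-(-q)^{-k})^i/((-q);(-q))_i$; substituting $t=-1/q$ (so that $|t|<1$ and the series converges) and simplifying converts this into $t^{\binom{i}{2}}(t^{k+1})^i/(t;t)_i$. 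Euler's identity $\sum_{i\ge 0}t^{\binom{i}{2}}w^i/(t;t)_i=\prod_{m\ge 0}(1+wt^m)$ with $w=t^{k+1}$ then evaluates the full sum as $\prod_{s\ge k+1}(1+(-1)^s q^{-s})$.

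Finally I would reconcile this product with the advertised answer. Combining the finite initial product $\prod_{s=1}^{k}(1+(-1)^s q^{-s})$ with the factor $|U(k,q)|$ and using $(1-q^{-\ell})(1+q^{-\ell})=1-q^{-2\ell}$ produces the denominator $q^{k^2}\prod_{\ell=1}^{k}(1-q^{-2\ell})$, while the remaining full product $\prod_{s\ge 1}(1+(-1)^s q^{-s})=\prod_{s\,\mathrm{even}}(1+q^{-s})\prod_{s\,\mathrm{odd}}(1-q^{-s})$ turns into $\prod_{r\ge 0}(1+q^{-(2r+1)})^{-1}$ by Euler's distinct-parts $=$ odd-parts identity $\prod_{s\ge 1}(1+x^s)=\prod_{s\,\mathrm{odd}}(1-x^s)^{-1}$. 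The subtlety here---and the reason a naive Ennola-type substitution $q\mapsto -q$ in Theorem \ref{RSGL} does not transfer the \emph{probability} formula, since it leaves a spurious factor $(-1)^k$ against $(-1)^i$---is the careful tracking of the signs coming from $(-q)$ versus $q^2$; this is the subtle use of $q$-series identities.
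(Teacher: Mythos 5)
First, a point of reference: the paper does not prove Theorem \ref{RSU} at all. It is recalled from the unpublished Rudvalis--Shinoda preprint \cite{RS}, with the remark that part 1 was proved there by M\"obius inversion and delicate combinatorics (an alternative proof is cited in \cite{F1}), and that part 2 follows from part 1 and an identity of Euler. So the only thing your proposal can be checked against is that remark, plus the bookkeeping conventions $Q=-q$, $|U(i,q)|=Q^{\binom{i}{2}}(Q;Q)_i$ that the paper sets up in Section \ref{identities}.

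Your treatment of part 2 is complete and correct, and it is exactly the route the paper indicates. The reduction of the summand to $t^{\binom{i}{2}}(t^{k+1})^i/(t;t)_i$ with $t=-1/q$ checks out, Euler's identity gives $\prod_{s\ge k+1}(1+(-1)^sq^{-s})$, the identity $|U(k,q)|\prod_{s=1}^k(1+(-1)^sq^{-s})=q^{k^2}\prod_{\ell=1}^k(1-q^{-2\ell})$ is right, and the conversion of $\prod_{s\ \mathrm{even}}(1+q^{-s})\prod_{s\ \mathrm{odd}}(1-q^{-s})$ into $\prod_{r\ge0}(1+q^{-(2r+1)})^{-1}$ via the distinct-parts/odd-parts identity is a clean way to get the advertised constant.

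Part 1, however, is a genuine gap, and it is the entire content of the theorem. Your double-counting setup and the $q^2$-binomial inversion $N_k=\sum_{j\ge k}(-1)^{j-k}(q^2)^{\binom{j-k}{2}}\binom{j}{k}_{q^2}M_j$ are a reasonable frame, and you correctly identify the obstruction: $M_j=\sum_{\dim W=j}|\{g:g|_W=\mathrm{id}\}|$ requires stratifying $j$-subspaces by the dimension of the radical $W\cap W^\perp$, computing for each stratum both the number of subspaces and the order of the pointwise stabilizer, and then collapsing the resulting double sum against the inversion to reach the stated single alternating sum. None of that is carried out; you explicitly write that you ``expect the resulting $q$-series to collapse to the stated formula.'' That expectation is precisely the ``delicate combinatorics'' the paper attributes to \cite{RS}, so as written the proposal establishes part 2 conditionally on part 1 but does not establish part 1. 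If you want a self-contained argument, either execute the stratified count (the stabilizer orders are products of a unipotent radical order and a smaller unitary group, and the orbit sizes follow from Witt's theorem), or follow the generating-function/cycle-index route of \cite{F1}.
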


{\it Remarks:}
\begin{enumerate}
\item Part 2 of Theorem \ref{RSU} follows from part 1 of Theorem \ref{RSU} and an identity of Euler.

\item The proof of part 1 of Theorem \ref{RSU} used Mobius inversion and delicate combinatorics. For a different proof,
the reader can consult \cite{F1}.

\end{enumerate}

Next we treat symplectic groups. Recall that \[ |Sp(2n,q)| = q^{n^2} \prod_{i=1}^n (q^{2i}-1).\]

\begin{theorem} \label{RSSp}
\begin{enumerate}
\item The proportion of elements of $Sp(2n,q)$ with a $2k$-dimensional fixed space is equal to
\[ \frac{1}{|Sp(2k,q)|} \sum_{i=0}^{n-k} \frac{(-1)^i q^{i(i+1)}}{|Sp(2i,q)| q^{2ik}} .\]

\item The proportion of elements of $Sp(2n,q)$ with a $2k+1$-dimensional fixed space is equal to
\[ \frac{1}{|Sp(2k,q)|q^{2k+1}} \sum_{i=0}^{n-k-1} \frac{(-1)^i q^{i(i+1)}}{|Sp(2i,q)| q^{2i(k+1)}} .\]

\item For $k$ fixed, the $n \rightarrow \infty$ limiting proportion of elements of $Sp(2n,q)$ with a $k$-dimensional
fixed space is equal to
\[ \prod_{r \geq 1} \left(1 + \frac{1}{q^r} \right)^{-1} \frac{1}{q^{(k^2+k)/2} (1-1/q) (1-1/q^2) \cdots (1-1/q^k)} .\]

\end{enumerate}
\end{theorem}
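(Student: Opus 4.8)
The plan is to derive part 3 from parts 1 and 2 by letting $n\to\infty$ and summing the resulting alternating series with two classical identities of Euler, handling the even- and odd-dimensional cases in parallel.

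First I would let $n\to\infty$ in the formulas of parts 1 and 2. This gives the limiting proportion for a $2k$-dimensional fixed space as $\frac{1}{|Sp(2k,q)|}\sum_{i\ge0}\frac{(-1)^i q^{i(i+1)}}{|Sp(2i,q)|\,q^{2ik}}$, and for a $(2k+1)$-dimensional fixed space as $\frac{1}{|Sp(2k,q)|\,q^{2k+1}}\sum_{i\ge0}\frac{(-1)^i q^{i(i+1)}}{|Sp(2i,q)|\,q^{2i(k+1)}}$. Convergence of these infinite series is immediate, since after substituting $|Sp(2i,q)|=q^{i^2}\prod_{j=1}^i(q^{2j}-1)$ the summands decay like $q^{-i^2}$.

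Next I would put each sum into Eulerian form. Writing $\prod_{j=1}^i(q^{2j}-1)=q^{i(i+1)}\prod_{j=1}^i(1-q^{-2j})$ and setting $x=1/q$, the $q^{i(i+1)}$ factors cancel and, using $x^{i^2}=(x^2)^{\binom i2}x^i$, each series becomes $\sum_{i\ge0}\frac{(x^2)^{\binom i2}}{(x^2;x^2)_i}\,(-x^{2k+1})^i$ in the even case, with $2k+1$ replaced by $2k+3$ in the odd case. I then apply Euler's identity $\sum_{i\ge0}\frac{t^{\binom i2}}{(t;t)_i}z^i=\prod_{j\ge0}(1+zt^j)$ with base $t=x^2$, which collapses the alternating sum into the product $\prod_{j\ge0}(1-x^{2k+1+2j})=\prod_{\substack{m\ge2k+1\\ m\ \mathrm{odd}}}(1-q^{-m})$, and likewise $\prod_{\substack{m\ge2k+3\\ m\ \mathrm{odd}}}(1-q^{-m})$ in the odd case.

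Finally I would combine with the prefactors and match the claimed closed form. Splitting $\prod_{j=1}^{2k}(1-q^{-j})$ (resp.\ $\prod_{j=1}^{2k+1}(1-q^{-j})$) into even- and odd-indexed factors, the even-indexed factors cancel the $\prod_{j=1}^k(1-q^{-2j})$ arising from $|Sp(2k,q)|=q^{k^2}\prod_{j=1}^k(q^{2j}-1)$, while the odd-indexed factors combine with the collapsed product to form the complete product $\prod_{\substack{m\ge1\\ m\ \mathrm{odd}}}(1-q^{-m})$; a check of the $q$-powers confirms the exponent $q^{(k^2+k)/2}$ in both parities. Euler's partition identity $\prod_{\substack{m\ge1\\ m\ \mathrm{odd}}}(1-q^{-m})=\prod_{r\ge1}(1+q^{-r})^{-1}$ then yields the stated expression. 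The main obstacle is the careful bookkeeping of the $q$-powers and the reindexing that lets the odd-indexed factors telescope into the full odd product; once the sum is in Eulerian form the two identities do the work, and pleasantly both parities reduce to the same final identity.
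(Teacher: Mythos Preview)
Your proposal is correct and follows exactly the route the paper indicates: the paper does not prove parts 1 and 2 here (it cites \cite{RS}, \cite{F2}, \cite{FG}) and for part 3 merely remarks that it ``follows from parts 1 and 2 of Theorem~\ref{RSSp} and an identity of Euler.'' You have supplied precisely those details---rewriting the summand so that Euler's product expansion $\sum_{i\ge0}\frac{t^{\binom i2}}{(t;t)_i}z^i=\prod_{j\ge0}(1+zt^j)$ applies with $t=q^{-2}$, and then invoking the second Euler identity $\prod_{m\ \mathrm{odd}}(1-q^{-m})=\prod_{r\ge1}(1+q^{-r})^{-1}$ to match the stated form; the bookkeeping of $q$-powers and the parity split of $\prod_{j=1}^{d}(1-q^{-j})$ are carried out correctly in both the even and odd cases.
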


{\it Remarks:}
\begin{enumerate}
\item Part 3 of Theorem \ref{RSSp} follows from parts 1 and 2 of Theorem \ref{RSSp} and an identity of Euler.

\item The proof of parts 1 and 2 of Theorem \ref{RSSp} used Mobius inversion and delicate combinatorics. For a different proof,
the reader can consult \cite{F2} for the case of odd characteristic, and \cite{FG} for the case of even characteristic.
\end{enumerate}

Next we treat the orthogonal groups. Note that it is not necessary to treat odd dimensional orthogonal groups $O(2n+1,q)$ in
even characteristic. Indeed, such groups are isomorphic to the symplectic groups $Sp(2n,q)$, and an element in
$Sp(2n,q)$ has a $k$-dimensional fixed space if and only if the corresponding element in $O(2n+1,q)$ has a $k+1$-dimensional
fixed space.

\begin{theorem} \label{RSOodd} Suppose that $q$ is odd.
\begin{enumerate}
\item The proportion of elements of $O(2n+1,q)$ with a $2k$-dimensional fixed space is equal to
\begin{eqnarray*}
& & \frac{1}{2} \frac{1}{q^{2k^2-k}(1-1/q^2)(1-1/q^4) \cdots (1-1/q^{2k})} \\
 & & \cdot \sum_{i=0}^{n-k} \frac{(-1)^i}{q^{i^2+2ik} (1-1/q^2)(1-1/q^4) \cdots (1-1/q^{2i})}.
\end{eqnarray*}

\item The proportion of elements of $O(2n+1,q)$ with a $2k+1$-dimensional fixed space is equal to
\begin{eqnarray*}
& & \frac{1}{2} \frac{1}{q^{2k^2+k}(1-1/q^2)(1-1/q^4) \cdots (1-1/q^{2k})} \\
& & \cdot \sum_{i=0}^{n-k} \frac{(-1)^i}{q^{i^2+2i(k+1)} (1-1/q^2)(1-1/q^4) \cdots (1-1/q^{2i})}.
\end{eqnarray*}

\item For $k$ fixed, the $n \rightarrow \infty$ limiting proportion of elements of $O(2n+1,q)$ with a $k$-dimensional
fixed space is equal to
\[ \prod_{r \geq 0} \left(1 + \frac{1}{q^r} \right)^{-1} \frac{1}{q^{(k^2-k)/2} (1-1/q) (1-1/q^2) \cdots (1-1/q^k)} .\]

\end{enumerate}
\end{theorem}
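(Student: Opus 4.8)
The plan is to follow the route indicated in the remark: pass to the limit in parts 1 and 2, evaluate the resulting infinite sums by an identity of Euler, and then reconcile the two expressions (for even- and odd-dimensional fixed spaces) with the single closed form claimed in part 3. Since a $k$-dimensional fixed space is either even- or odd-dimensional, I would treat $k=2\ell$ and $k=2\ell+1$ separately, using part 1 for the former and part 2 for the latter, and check that each limit reproduces the target.

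First I would let $n \to \infty$ in the formulas of parts 1 and 2. Because the summands decay like $q^{-i^2}$, the sums converge absolutely and the limit may be taken term by term, so for a $2\ell$-dimensional fixed space the limiting proportion is
\[ \frac{1}{2} \frac{1}{q^{2\ell^2-\ell}\prod_{j=1}^\ell (1-1/q^{2j})} \sum_{i=0}^{\infty} \frac{(-1)^i}{q^{i^2+2i\ell}\prod_{j=1}^i(1-1/q^{2j})}, \]
and analogously for $2\ell+1$. The key step is to recognize the inner sum as an instance of Euler's $q$-exponential identity. Writing $i^2+2i\ell = 2\binom{i}{2}+(2\ell+1)i$ and setting $Q=1/q^2$, the summand becomes $Q^{\binom{i}{2}}(-q^{-(2\ell+1)})^i\big/(Q;Q)_i$, where $(Q;Q)_i=\prod_{j=1}^i(1-Q^j)$; then Euler's identity $\sum_{i \ge 0} Q^{\binom{i}{2}} z^i/(Q;Q)_i = \prod_{j \ge 0}(1+zQ^j)$ with $z=-q^{-(2\ell+1)}$ collapses the sum to $\prod_{j \ge 0}(1-1/q^{2\ell+1+2j})$, i.e. the product of $(1-1/q^m)$ over odd $m \ge 2\ell+1$. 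The same computation applied to part 2 yields the product over odd $m \ge 2\ell+3$.

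It then remains to check the two algebraic identities
\[ \frac{1}{2}\frac{\prod_{\text{odd } m \ge 2\ell+1}(1-1/q^m)}{q^{2\ell^2-\ell}\prod_{j=1}^\ell (1-1/q^{2j})} = \prod_{r \ge 0}\left(1+\frac{1}{q^r}\right)^{-1}\frac{1}{q^{\binom{2\ell}{2}}\prod_{j=1}^{2\ell}(1-1/q^j)} \]
and its odd-dimensional counterpart. For this I would invoke the second, classical Euler identity $\prod_{r \ge 1}(1+1/q^r) = \prod_{\text{odd } m \ge 1}(1-1/q^m)^{-1}$, together with $\prod_{r \ge 0}(1+1/q^r)^{-1} = \tfrac{1}{2}\prod_{r \ge 1}(1+1/q^r)^{-1}$, to rewrite the right-hand side as $\tfrac{1}{2}\prod_{\text{odd } m \ge 1}(1-1/q^m)\big/\prod_{j=1}^{2\ell}(1-1/q^j)$, and then split $\prod_{j=1}^{2\ell}(1-1/q^j)$ into its odd-exponent factors $(1-1/q^{2j-1})$ and even-exponent factors $(1-1/q^{2j})$. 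The odd-exponent factors cancel the leading odd terms of $\prod_{\text{odd } m}(1-1/q^m)$, leaving exactly the left-hand side; the powers of $q$ already match since $\binom{2\ell}{2}=2\ell^2-\ell$. The odd-dimensional case is identical with $2\ell$ replaced by $2\ell+1$.

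The main obstacle I anticipate is purely bookkeeping: correctly tracking which odd and even factors survive the cancellation, and confirming that both parities of $k$ funnel into the same uniform formula. Recognizing the two Euler identities (the $q$-exponential summation and the distinct-parts/odd-parts product identity) is the conceptual content; once they are in hand, the verification reduces to matching $q$-powers and reindexing the odd products.
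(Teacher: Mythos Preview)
Your proposal is correct and follows exactly the route the paper indicates. The paper does not prove parts 1 and 2 at all (it attributes them to Rudvalis--Shinoda and cites \cite{RS}, \cite{F2}), and for part 3 it says only that ``Part 3 of Theorem \ref{RSOodd} follows from parts 1 and 2 of Theorem \ref{RSOodd} and an identity of Euler''; your write-up supplies precisely those omitted details, using the same Euler $q$-exponential identity for the inner sum and the distinct-parts/odd-parts product identity to match the prefactor.
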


{\it Remarks:}
\begin{enumerate}
\item Part 3 of Theorem \ref{RSOodd} follows from parts 1 and 2 of Theorem \ref{RSOodd} and an identity of Euler.

\item The proof of parts 1 and 2 of Theorem \ref{RSOodd} used Mobius inversion and delicate combinatorics. For a different proof,
the reader can consult \cite{F2}.
\end{enumerate}

Finally, we consider even dimensional orthogonal groups. Note that the formulas are the same in odd and even characteristic.

\begin{theorem} \label{RSOeven}

\begin{enumerate}
\item The proportion of elements of $O^{\pm}(2n,q)$ with a $2k$-dimensional fixed space is equal to

\begin{eqnarray*} & & \frac{q^k}{2|GL(k,q^2)|} \sum_{i=0}^{n-k} \frac{(-1)^i}{q^{(2k-1)i} (q^{2i}-1) \cdots (q^4-1)(q^2-1)}\\
 & & \pm \frac{1}{2} \frac{(-1)^{n-k}}{q^{2k(n-k)} |GL(k,q^2)| (q^{2(n-k)}-1) \cdots (q^4-1)(q^2-1)}. \end{eqnarray*}

\item The proportion of elements of $O^{\pm}(2n,q)$ with a $2k+1$-dimensional fixed space is
\[ \frac{1}{2 q^k |GL(k,q^2)|} \sum_{i=0}^{n-k-1} \frac{(-1)^i}{q^{i^2+2(k+1)i} (1-1/q^2)(1-1/q^4) \cdots (1-1/q^{2i})} .\]

\item For $k$ fixed, the $n \rightarrow \infty$ limiting proportion of elements of $O^{\pm}(2n,q)$ with a $k$-dimensional
fixed space is equal to
\[ \prod_{r \geq 0} \left(1 + \frac{1}{q^r} \right)^{-1} \frac{1}{q^{(k^2-k)/2} (1-1/q) (1-1/q^2) \cdots (1-1/q^k)} .\]
\end{enumerate}

\end{theorem}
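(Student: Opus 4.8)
The plan is to deduce part 3 from the exact finite-$n$ formulas in parts 1 and 2 (which we take from Rudvalis and Shinoda) by letting $n \to \infty$ and evaluating the resulting series with an identity of Euler. Write the fixed-space dimension as $k$. When $k = 2m$ is even, start from part 1 with its internal index renamed to $m$; the second, ``$\pm$'' summand is bounded in absolute value by $\tfrac12\big(q^{2m(n-m)}|GL(m,q^2)|\prod_{j=1}^{n-m}(q^{2j}-1)\big)^{-1}$, and since $\prod_{j=1}^{n-m}(q^{2j}-1)\to\infty$ this term tends to $0$, so only the first term survives. When $k = 2m+1$ is odd, start from part 2, whose single sum simply has its upper limit pushed to $\infty$. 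In both cases we are left with $\tfrac12\,C_m \sum_{i \ge 0} \frac{(-1)^i}{q^{a i + i^2}\prod_{j=1}^{i}(1-q^{-2j})}$ for an explicit prefactor $C_m$ (a power $q^{\pm m}$ over $|GL(m,q^2)|$) and an explicit linear coefficient $a$, namely $a = 2m$ in the even case and $a = 2m+2$ in the odd case.

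The key step is to recognize each series as a specialization of Euler's identity $\sum_{i \ge 0} \frac{Q^{\binom{i}{2}} z^{i}}{(Q;Q)_i} = (-z;Q)_\infty$, where $(Q;Q)_i = \prod_{j=1}^{i}(1-Q^j)$ and $(-z;Q)_\infty = \prod_{r \ge 0}(1 + zQ^{r})$. Setting $Q = q^{-2}$ and rewriting the exponent as $a i + i^2 = (a+1)i + 2\binom{i}{2}$ turns $\prod_{j=1}^i(1-q^{-2j})$ into $(Q;Q)_i$ and turns $q^{-(ai+i^2)}$ into $Q^{\binom{i}{2}}q^{-(a+1)i}$, so the series equals $(q^{-(a+1)};Q)_\infty = \prod_{r \ge 0}\big(1 - q^{-(a+1)-2r}\big)$. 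Thus the even ($k=2m$) series collapses to $\prod_{r \ge m}(1 - q^{-(2r+1)})$ and the odd ($k=2m+1$) series to $\prod_{r \ge m+1}(1 - q^{-(2r+1)})$; in each case this is the full odd tail $\prod_{r\ge 0}(1-q^{-(2r+1)})$ with its first few factors divided off.

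It then remains to reassemble the pieces into the claimed uniform answer. Using $|GL(m,q^2)| = q^{2m^2}\prod_{i=1}^m(1-q^{-2i})$, the even-indexed factors $\prod_{i=1}^m(1-q^{-2i})$ coming from $C_m$ merge with the odd-indexed factors stripped from the Euler product to give exactly $\prod_{j=1}^{k}(1-q^{-j})$, while tracking the powers of $q$ reproduces $q^{-(k^2-k)/2}$; one checks this separately but identically for $k=2m$ and $k=2m+1$. Finally, the surviving infinite product $\prod_{r \ge 0}(1-q^{-(2r+1)})$ is identified with $\prod_{r \ge 1}(1+q^{-r})^{-1}$ by factoring $(q^{-1};q^{-1})_\infty$ into its odd and even parts, so that the leftover constant $\tfrac12\prod_{r\ge 1}(1+q^{-r})^{-1}$ becomes $\prod_{r\ge 0}(1+q^{-r})^{-1}$, matching the statement. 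The main obstacle is purely bookkeeping: verifying that the partial Euler product supplies precisely the odd factors missing from $\prod_{j=1}^k(1-q^{-j})$ and that the two parities collapse to one formula, which is exactly where the exponent identity $ai+i^2=(a+1)i+2\binom{i}{2}$ and the odd/even factorization of the Euler function do the real work.
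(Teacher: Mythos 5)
Your proposal matches the paper's own treatment: parts 1 and 2 are simply quoted from Rudvalis--Shinoda (the paper likewise records them without proof, pointing to \cite{F2} and \cite{FST} for alternatives), and part 3 is obtained exactly as the paper indicates --- let $n \to \infty$, discard the vanishing $\pm$ term, and apply Euler's identity $\sum_{i\ge 0} Q^{\binom{i}{2}}z^i/(Q;Q)_i = (-z;Q)_\infty$ with $Q=q^{-2}$, after which the odd/even factorization of $(q^{-1};q^{-1})_\infty$ reassembles both parities into the single stated formula. I checked the bookkeeping (the exponent identity $ai+i^2=(a+1)i+2\binom{i}{2}$, the merge into $\prod_{j=1}^k(1-q^{-j})$, the power $q^{-(k^2-k)/2}$, and the absorption of the $\tfrac12$ into the $r=0$ factor of $\prod_{r\ge 0}(1+q^{-r})^{-1}$) and it is correct; you have just carried out in detail the computation the paper compresses into the phrase ``follows from parts 1 and 2 and an identity of Euler.''
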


{\it Remarks:}
\begin{enumerate}
\item Note that for Theorem \ref{RSOeven}, the third part follows from parts 1 and 2, and an identity of Euler.

\item The proof of parts 1 and 2 of Theorems \ref{RSOeven} used Mobius inversion and delicate combinatorics.
For a different proof,
the reader can consult \cite{F2} for the case of odd characteristic, and \cite{FST} for the case of even characteristic.
\end{enumerate}

\section{$q$-series identities}
\label{identities}

The purpose of this section is to collect the
$q$-series identities which will be used in the proofs of our
main theorems.

We use the standard notation \cite[(I.1),(I.42)]{GaRa} for the $q$-shifted
factorial and the $q$-binomial coefficient
$$
 \begin{aligned}
 (A;q)_n &=(1-A)(1-Aq)\cdots(1-Aq^{n-1}), \\
  \binom{n}{k}_q &= \frac{(q^n-1) \cdots (q^{n-k+1}-1)}{(q^k-1) \cdots (q-1)}
=\frac{(q^n;q^{-1})_k}{(q;q)_k}.
\end{aligned}
$$

Note that all of the numerical quantities in Section \ref{RSreview} can be written in terms of $q$-shifted factorials.
\begin{prop} We have
$$
\begin{aligned}
|GL(n,q)|=&(q^n-1)\cdots (q^n-q^{n-1})=(-1)^nq^{\binom{n}{2}}(q;q)_n,\\
|U(n,q)|=&q^{\binom{n}{2}}\prod_{i=1}^n (q^i-(-1)^i)=Q^{\binom{n}{2}}(Q;Q)_n,\quad Q=-q,\\
|Sp(2n,q)|=&q^{n^2}(q^2-1)\cdots (q^{2n}-1)=(-1)^nR^{n^2/2}(R;R)_n, \quad R=q^2.
\end{aligned}
$$
\end{prop}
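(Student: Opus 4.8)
The plan is to verify each of the three formulas by the same elementary device: pull the appropriate power of $q$ out of every factor, match the remaining product against the definition of the $q$-shifted factorial $(A;q)_n$, and then track the signs. No identity beyond the definition of $(A;q)_n$ is needed, so the whole proposition is a direct computation.

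For $|GL(n,q)|$ I would begin from $|GL(n,q)| = \prod_{i=0}^{n-1}(q^n-q^i)$ and factor $q^i$ out of the $i$-th term, which contributes $q^{0+1+\cdots+(n-1)} = q^{\binom{n}{2}}$ and leaves $\prod_{j=1}^n(q^j-1)$. Writing each factor as $q^j-1 = -(1-q^j)$ produces the overall sign $(-1)^n$ and the product $(q;q)_n = \prod_{j=1}^n(1-q^j)$, yielding $|GL(n,q)| = (-1)^n q^{\binom{n}{2}}(q;q)_n$ at once. For $|Sp(2n,q)|$ the computation is immediate with $R=q^2$: one has $q^{n^2} = R^{n^2/2}$, while $\prod_{i=1}^n(q^{2i}-1) = \prod_{i=1}^n\bigl(-(1-R^i)\bigr) = (-1)^n(R;R)_n$, so $|Sp(2n,q)| = (-1)^n R^{n^2/2}(R;R)_n$.

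The unitary case carries the only genuine bookkeeping, and this is where I would be most careful. With $Q=-q$ one has $Q^i = (-1)^i q^i$, so the identity $q^i - (-1)^i = (-1)^i(Q^i-1)$ holds for every $i$. Taking the product over $i=1,\dots,n$ contributes a sign $(-1)^{1+2+\cdots+n} = (-1)^{\binom{n+1}{2}}$ together with $\prod_{i=1}^n(Q^i-1) = (-1)^n(Q;Q)_n$; combining these with $q^{\binom{n}{2}} = (-1)^{\binom{n}{2}}Q^{\binom{n}{2}}$ reduces the claim $|U(n,q)| = Q^{\binom{n}{2}}(Q;Q)_n$ to the parity check $\binom{n+1}{2} + n \equiv \binom{n}{2} \pmod 2$, which is immediate from $\binom{n+1}{2} = \binom{n}{2}+n$. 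Thus the only obstacle worth naming is the sign bookkeeping concentrated in this last case; everything else is direct factoring.
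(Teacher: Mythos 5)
Your computation is correct in all three cases, including the sign bookkeeping for the unitary group, where the parity identity $\binom{n+1}{2}=\binom{n}{2}+n$ does exactly what you need. The paper states this proposition without proof, treating it as the routine rewriting of the standard order formulas that your argument carries out, so there is nothing further to compare.
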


A certain double sum occurs in each of our results; we next define a general such sum.
\begin{definition}
\label{doubledefn}
Let
$$
D_n(P,X,Y)=\sum_{k=0}^n \frac{(-X)^k}{ P^{\binom{k}{2}}(P;P)_k}
\sum_{i=0}^{n-k} \frac{P^{-ik}Y^i}
{(P;P)_{i}}.
$$
\end{definition}

\begin{prop}
\label{mainprop}
If $j$ is a non-negative integer,
$$
D_n(P,YP^j,Y)=\sum_{J=0}^n \binom{j}{J}_P Y^J .
$$
\end{prop}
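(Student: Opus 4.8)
The plan is to substitute $X=YP^j$ into Definition \ref{doubledefn} and then regroup the resulting double sum according to the total power of $Y$. Since $(-X)^k=(-1)^kY^kP^{jk}$ carries a factor $Y^k$ while the inner summand carries $Y^i$, the monomial $Y^J$ collects exactly the terms with $i+k=J$. First I would write $i=J-k$ and check that the constraints $0\le k$ and $0\le i\le n-k$ become $0\le k\le J\le n$, so that regrouping produces $D_n(P,YP^j,Y)=\sum_{J=0}^n c_J Y^J$ with each $c_J$ a finite sum over $k$ from $0$ to $J$. The goal is then to identify $c_J$ with $\binom{j}{J}_P$.

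Next I would simplify the exponent of $P$ in $c_J$. Collecting the powers coming from $P^{jk}$, from $P^{-ik}=P^{-(J-k)k}$, and from the $1/P^{\binom{k}{2}}$ in the outer factor, the exponent is $jk-(J-k)k-\binom{k}{2}$. Using $k^2-\binom{k}{2}=\binom{k+1}{2}$ together with $\binom{k+1}{2}=\binom{k}{2}+k$, this collapses to $\binom{k}{2}+(j-J+1)k$. Factoring $1/(P;P)_J$ out of $c_J$ and writing $\frac{1}{(P;P)_k(P;P)_{J-k}}=\frac{1}{(P;P)_J}\binom{J}{k}_P$, I obtain $c_J=\frac{1}{(P;P)_J}\sum_{k=0}^J(-1)^kP^{\binom{k}{2}}\binom{J}{k}_P\,(P^{\,j-J+1})^k$.

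The key step is to recognize the inner sum as a specialization of the terminating $q$-binomial theorem $\sum_{k=0}^J(-1)^kP^{\binom{k}{2}}\binom{J}{k}_P z^k=(z;P)_J$ (Cauchy's identity, see \cite{GaRa}), taken at $z=P^{\,j-J+1}$. This gives $c_J=(P^{\,j-J+1};P)_J/(P;P)_J$, and since $(P^{\,j-J+1};P)_J=(1-P^{\,j-J+1})(1-P^{\,j-J+2})\cdots(1-P^{\,j})$ is precisely the numerator of $\binom{j}{J}_P$, we conclude $c_J=\binom{j}{J}_P$, as desired. I would also remark that for $J>j$ the product $(P^{\,j-J+1};P)_J$ contains the vanishing factor $1-P^0$, so those terms drop out and the stated sum to $n$ agrees with the natural truncation at $\min(j,n)$.

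I expect the main obstacle to be purely the exponent bookkeeping: getting the power of $P$ to collapse to exactly $\binom{k}{2}$ plus a term linear in $k$, so that the sum matches the $q$-binomial theorem with the clean specialization $z=P^{\,j-J+1}$. Once the exponent is in that form the rest is immediate, so the elementary identity $k^2-\binom{k}{2}=\binom{k+1}{2}$ is really doing all the work. A secondary point worth verifying carefully is the regrouping of the summation ranges, which is legitimate here because the double sum is finite.
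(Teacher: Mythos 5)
Your proposal is correct and follows essentially the same route as the paper: regroup the double sum by $J=i+k$, factor out $Y^J/(P;P)_J$, evaluate the inner sum by the terminating $q$-binomial theorem at $z=P^{\,j-J+1}$, and recognize $(P^{\,j-J+1};P)_J/(P;P)_J=\binom{j}{J}_P$. The only (immaterial) difference is that you specialize $X=YP^j$ at the outset, whereas the paper keeps $X$ general long enough to record the intermediate identity it reuses for the $j=-1$ case.
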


\begin{proof}
Let $J=k+i.$
The double sum $D_n(P,X,Y)$ can be rewritten as
$$
\begin{aligned}
D_n(P,X,Y) &=\sum_{J=0}^n \sum_{k=0}^J \frac{(-X)^k Y^{J-k}P^{-\binom{k}{2}}P^{-k(J-k)}}
{(P;P)_{J-k}(P;P)_k}\\
&=\sum_{J=0}^n \frac{Y^J}{(P;P)_J}\sum_{k=0}^J \binom{J}{k}_P P^{\binom{k}{2}} (-1)^k (XP^{1-J}/Y)^k.
\end{aligned}
$$

Now use the $q$-binomial theorem (page 78 of \cite{Br})
\begin{equation}
\label{qbt}
\sum_{k=0}^J \binom{J}{k}_P P^{\binom{k}{2}} (-1)^k Z^k=(Z;P)_J
\end{equation}
with $Z=XP^{1-J}/Y$ to evaluate the inner sum as $(XP^{1-J}/Y;P)_J.$ So
\begin{equation}
\label{singlesum}
D_n(P,X,Y)= \sum_{J=0}^n \frac{(XP^{1-J}/Y;P)_J}{(P;P)_J}Y^J
\end{equation}
If $X=YP^j$, then
$$
\frac{(XP^{1-J}/Y;P)_J}{(P;P)_J}=\binom{j}{J}_P
$$
\end{proof}

One may use Proposition~\ref{mainprop} with $j=-1$ (namely \eqref{singlesum} with $X=YP^{-1}$)
to obtain the next result.
\begin{prop}
\label{miniprop}
If $n$ is a non-negative integer,
$$
D_n(P,YP^{-1},Y)=\sum_{J=0}^n P^{-\binom{J+1}{2}} (-Y)^J .
$$
\end{prop}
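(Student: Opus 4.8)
The plan is to start directly from the single-sum formula \eqref{singlesum}, which was obtained inside the proof of Proposition~\ref{mainprop} using only the $q$-binomial theorem, and is therefore valid for arbitrary $X$ rather than merely for $X = YP^j$ with $j \geq 0$. Specializing $X = YP^{-1}$ collapses the argument of the $q$-shifted factorial: since $XP^{1-J}/Y = P^{-1}\cdot P^{1-J} = P^{-J}$, equation \eqref{singlesum} becomes
$$
D_n(P,YP^{-1},Y) = \sum_{J=0}^n \frac{(P^{-J};P)_J}{(P;P)_J}\, Y^J,
$$
so the whole problem reduces to evaluating the single factor $(P^{-J};P)_J / (P;P)_J$.

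The key step, and the only genuine computation, is to reverse the $q$-shifted factorial $(P^{-J};P)_J$. By the definition of $(A;q)_n$ this product is $\prod_{m=0}^{J-1}(1-P^{m-J})$; reindexing by $\ell = J-m$ rewrites it as $\prod_{\ell=1}^J (1 - P^{-\ell})$, and then extracting a factor $-P^{-\ell}$ from each term via $1-P^{-\ell} = -P^{-\ell}(1-P^\ell)$ yields
$$
(P^{-J};P)_J = (-1)^J P^{-\binom{J+1}{2}} (P;P)_J,
$$
where the exponent $\binom{J+1}{2} = 1+2+\cdots+J$ is exactly the sum of the extracted powers of $P$. Dividing by $(P;P)_J$ cancels that factor, leaving $(-1)^J P^{-\binom{J+1}{2}}$.

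Substituting back, each summand equals $(-1)^J P^{-\binom{J+1}{2}} Y^J = P^{-\binom{J+1}{2}}(-Y)^J$, and summing over $J$ from $0$ to $n$ produces the claimed identity. I expect no real obstacle: the argument rests entirely on the reflection formula for $(P^{-J};P)_J$, which is routine but easy to botch, so the only thing to watch carefully is the accounting of signs and of the $P$-exponent, since it is precisely this $\binom{J+1}{2}$ that generates the triangular-number weighting on the right-hand side.
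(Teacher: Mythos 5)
Your proposal is correct and follows exactly the route the paper indicates: specialize the single-sum formula \eqref{singlesum} (valid for arbitrary $X$) to $X=YP^{-1}$ and then evaluate $(P^{-J};P)_J/(P;P)_J$ by the reversal identity $(P^{-J};P)_J=(-1)^JP^{-\binom{J+1}{2}}(P;P)_J$. The paper leaves that last computation implicit, but your sign and exponent bookkeeping is right, so nothing further is needed.
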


Next, because the sum  in Proposition~\ref{mainprop} later occurs with
special choices of $Y$, we record three simple evaluations. The first identity
(\cite[p. 37]{A}) occurs for the unitary groups, while the last two (\cite[p. 49]{A})
will be helpful in treating the symplectic and orthogonal groups.

\begin{prop}
\label{andrews}
\begin{enumerate}
\item For any non-negative integer $j,$
$$
\sum_{J=0}^{2j} \binom{2j}{J}_q (-1)^J = (q;q^2)_j .
$$
\item For any non-negative integer $n,$
$$
\sum_{J=0}^{n} \binom{n}{J}_q q^{J/2} = (-q^{1/2};q^{1/2})_n.
$$
For any non-negative integer $n,$
\item
$$
\sum_{J=0}^n \binom{n}{J}_q q^{-J/2} = (-q^{1/2};q^{1/2})_n/q^{n/2}.
$$
\end{enumerate}
\end{prop}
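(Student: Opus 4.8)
The plan is to treat all three sums as special values of the Rogers--Szeg\H{o} polynomial
$$
f_n(z) := \sum_{J=0}^n \binom{n}{J}_q z^J ,
$$
so that part (1) is $f_{2j}(-1)$, part (2) is $f_n(q^{1/2})$, and part (3) is $f_n(q^{-1/2})$. Before doing any real work, I would record two consequences of the symmetry $\binom{n}{J}_q = \binom{n}{n-J}_q$. First, reindexing $J \mapsto n-J$ in part (3) gives $f_n(q^{-1/2}) = q^{-n/2} f_n(q^{1/2})$, so part (3) follows immediately from part (2); since the right-hand side of part (2) is $(-q^{1/2};q^{1/2})_n$, this yields exactly $(-q^{1/2};q^{1/2})_n/q^{n/2}$. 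Second, the same reindexing in part (1) gives $f_n(-1) = (-1)^n f_n(-1)$, which forces $f_n(-1)=0$ for odd $n$; thus only the even case $f_{2j}(-1)=(q;q^2)_j$ needs genuine argument. I would also note that the $q$-binomial theorem \eqref{qbt} does not apply directly here, precisely because these sums lack the Gaussian factor $q^{\binom{J}{2}}$ present in \eqref{qbt}; this is what makes the evaluations special rather than formal.

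For parts (2) and (3) there is a short route. Starting from the Pascal recurrence $\binom{n}{J}_q = \binom{n-1}{J}_q + q^{n-J}\binom{n-1}{J-1}_q$ and evaluating at $z=q^{1/2}$, one gets $f_n(q^{1/2}) = f_{n-1}(q^{1/2}) + q^{n-1/2} f_{n-1}(q^{-1/2})$. Feeding in the symmetry relation $f_{n-1}(q^{-1/2}) = q^{-(n-1)/2} f_{n-1}(q^{1/2})$ collapses this to the single-term recurrence $f_n(q^{1/2}) = (1+q^{n/2}) f_{n-1}(q^{1/2})$, which telescopes from $f_0(q^{1/2})=1$ to $\prod_{i=1}^n(1+q^{i/2}) = (-q^{1/2};q^{1/2})_n$, proving part (2); part (3) then follows by the symmetry relation above. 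Note this route is not circular: the symmetry relation is an identity between the two sums, proved by reindexing alone, and is used only to simplify the recurrence, not to assume any closed form.

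The even case of part (1) is where the work lies, and I expect the derivation of a three-term recurrence to be the main obstacle. My plan is to establish
$$
f_n(z) = (1+z)\,f_{n-1}(z) - (1-q^{n-1})\,z\,f_{n-2}(z) .
$$
The two Pascal recurrences translate into the functional equations
$$
f_n(z) = z\,f_{n-1}(z) + f_{n-1}(qz), \qquad f_n(z) = f_{n-1}(z) + q^{n-1}z\,f_{n-1}(z/q),
$$
and the real step is to rewrite the dilated term $f_{n-1}(qz)$ in the first equation using lower-index values: applying the second functional equation at index $n-1$ with argument $qz$, and the first at index $n-1$ with argument $z$, then eliminating the common term $f_{n-2}(qz)$, gives $f_{n-1}(qz) = f_{n-1}(z) + (q^{n-1}-1)z\,f_{n-2}(z)$; substituting this back into the first functional equation produces the three-term recurrence. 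Once it is in hand, setting $z=-1$ kills the $(1+z)$ term and leaves $f_n(-1) = (1-q^{n-1}) f_{n-2}(-1)$; iterating from $f_0(-1)=1$ yields $f_{2j}(-1) = \prod_{i=1}^j (1-q^{2i-1}) = (q;q^2)_j$, completing part (1). The algebraic elimination producing the recurrence is the only non-routine step; everything after it is a one-line induction.
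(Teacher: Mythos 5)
Your proof is correct: the symmetry reduction of (3) to (2) and of (1) to the even case, the one-term recurrence $f_n(q^{1/2})=(1+q^{n/2})f_{n-1}(q^{1/2})$, and the elimination yielding the Rogers--Szeg\H{o} three-term recurrence $f_n(z)=(1+z)f_{n-1}(z)-(1-q^{n-1})zf_{n-2}(z)$ all check out, and setting $z=-1$ does telescope to $(q;q^2)_j$. The paper itself gives no proof, simply citing Andrews (p.\ 37 and p.\ 49); the authors' own sketch (in their end-notes) runs instead through the generating function $\sum_{n\ge 0} f_n(t)x^n/(q;q)_n = 1/\bigl((xt;q)_\infty (x;q)_\infty\bigr)$, obtained from Euler's $q$-exponential identity: at $t=-1$ the product collapses to $1/(x^2;q^2)_\infty$ and at $t=q^{\pm 1/2}$ to $1/(x;q^{1/2})_\infty$ (up to the reindexing you also use for part (3)), after which comparing coefficients of $x^n$ gives all three evaluations by one uniform mechanism. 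The trade-off is clear: the generating-function route is shorter and explains why exactly these values of $t$ are the ``nice'' ones (the two infinite products merge), but it requires the infinite-product form of the $q$-binomial theorem; your recurrence argument is entirely finite and elementary, using only the $q$-Pascal rules, at the cost of the somewhat ad hoc elimination step needed to produce the three-term recurrence. Both are standard and both are complete; yours would serve as a self-contained replacement for the citation.
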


Finally we record a generating function related to the double sum.
\begin{prop}
\label{genfunc} If $0<a<1,$ $|P|>1,$ and
$$
D_{nk}= \frac{(-X)^k}{ P^{\binom{k}{2}}(P;P)_k}
\sum_{i=0}^{n-k} \frac{P^{-ik}Y^i}
{(P;P)_{i}},
$$
then
$$
\begin{aligned}
F_k(a,X,Y;P) :=&(1-a)\sum_{n=k}^\infty D_{nk}a^n\\
= &(aYP^{-1};P^{-1})_\infty
\frac{(aX)^k P^{-k^2}}{(P^{-1};P^{-1})_k (aYP^{-1};P^{-1})_k}.
\end{aligned}
$$
\end{prop}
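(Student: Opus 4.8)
The plan is to regard $F_k$ as a power series in $a$ and to evaluate the inner, $n$-independent sum in closed form. First I would isolate the factor of $D_{nk}$ that does not involve $n$, namely $C_k := (-X)^k/\bigl(P^{\binom{k}{2}}(P;P)_k\bigr)$, and write $D_{nk}=C_k S_{n-k}$, where $S_m = \sum_{i=0}^m (P^{-k}Y)^i/(P;P)_i$ is a partial sum of one fixed series (here I use $P^{-ik}Y^i=(P^{-k}Y)^i$). Then
\[
(1-a)\sum_{n=k}^\infty D_{nk}\,a^n=(1-a)\,C_k\,a^k\sum_{m=0}^\infty S_m\,a^m .
\]
Because $|P|>1$ makes $(P;P)_i^{-1}$ decay super-exponentially and $0<a<1$, every series here is absolutely convergent, so I may interchange the two summations. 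Writing $b=P^{-k}Y$ and summing over $m\ge i$ gives $\sum_{m\ge 0}S_m a^m=(1-a)^{-1}\sum_{i\ge 0}(ab)^i/(P;P)_i$; the factor $(1-a)$ then cancels against the prefactor, reducing the whole problem to the single sum $\sum_{i\ge 0}(ab)^i/(P;P)_i$.

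The next step is to rewrite this sum in base $P^{-1}$, the natural regime for $q$-exponential identities when $|P^{-1}|<1$. I would apply the elementary identity $(P;P)_i=(-1)^iP^{\binom{i+1}{2}}(P^{-1};P^{-1})_i$, obtained by pulling a factor $-P^j$ out of each $1-P^j$. Substituting this and using $\binom{i+1}{2}=\binom{i}{2}+i$ recasts the sum as
\[
\sum_{i\ge 0}\frac{(P^{-1})^{\binom{i}{2}}\,(-abP^{-1})^i}{(P^{-1};P^{-1})_i},
\]
which is precisely Euler's identity $\sum_{i\ge 0}q^{\binom{i}{2}}z^i/(q;q)_i=(-z;q)_\infty$ with $q=P^{-1}$ and $z=-abP^{-1}$. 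Hence the sum equals $(abP^{-1};P^{-1})_\infty=(aYP^{-k-1};P^{-1})_\infty$.

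Finally I would reconcile this infinite product with the claimed form. Shifting the product index by $k$ yields $(aYP^{-k-1};P^{-1})_\infty=(aYP^{-1};P^{-1})_\infty/(aYP^{-1};P^{-1})_k$, which supplies simultaneously the global factor $(aYP^{-1};P^{-1})_\infty$ and the denominator factor $(aYP^{-1};P^{-1})_k$. It then remains only to simplify $C_k a^k$: applying the same base-change identity to $(P;P)_k$ and using $\binom{k}{2}+\binom{k+1}{2}=k^2$ collapses the sign and exponent bookkeeping to $C_k=X^kP^{-k^2}/(P^{-1};P^{-1})_k$, so that $C_k a^k=(aX)^kP^{-k^2}/(P^{-1};P^{-1})_k$, exactly matching the stated right-hand side. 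The only genuinely delicate point is the sign-and-exponent accounting in passing from base $P$ to base $P^{-1}$ — keeping the $(-1)^i$, the exponent $\binom{i+1}{2}$, and the final $k^2$ all correct — together with a brief verification that absolute convergence legitimizes the interchange of sums; once those are secured the identity follows mechanically from Euler's formula.
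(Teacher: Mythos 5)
Your argument is correct, and it is essentially the paper's own proof: the interchange of the $m$ and $i$ summations is the same manipulation as the paper's reindexing $n\mapsto n+k+i$, after which both arguments cancel the $(1-a)$, pass to base $P^{-1}$, apply Euler's identity $\sum_i q^{\binom{i}{2}}z^i/(q;q)_i=(-z;q)_\infty$, and split $(aYP^{-k-1};P^{-1})_\infty$ as $(aYP^{-1};P^{-1})_\infty/(aYP^{-1};P^{-1})_k$. All of your sign and exponent bookkeeping ($\binom{k}{2}+\binom{k+1}{2}=k^2$, etc.) checks out.
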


\begin{proof}  In the double sum definition of $F_k(a,X,Y;P),$ replace
$n$ by $n+k+i$ to obtain
$$
\begin{aligned}
F_k(a,X,Y;P)=&(1-a)\sum_{0\le n,i} a^{n+i}
\frac{(-aX)^k}{P^{\binom{k}{2}}(P;P)_k}
\frac{P^{-ik}Y^i}{(P;P)_i}\\
=&
\frac{(aX)^k P^{-k^2}}{(P^{-1};P^{-1})_k}
\sum_{i=0}^\infty \frac{P^{-\binom{i}{2}} (-YP^{-1-k})^i}
{(P^{-1};P^{-1})_i}a^i.
\end{aligned}
$$
Applying a limiting case of the $q$-binomial theorem
$$
\begin{aligned}
\sum_{i=0}^\infty \frac{P^{-\binom{i}{2}}}{(P^{-1};P^{-1})_i} (-P^{-1-k}Ya)^i=&
(aYP^{-1-k};P^{-1})_\infty\\
=& \frac{(aYP^{-1};P^{-1})_\infty}{(aYP^{-1};P^{-1})_k}
\end{aligned}
$$
completes the proof.
\end{proof}

\begin{remark} If $X=Y$, then $F_k(a,X,Y;P)$ may be summed by a limiting case of the
$q$-Gauss sum, \cite[(II.8)]{GaRa},
$$
\sum_{k=0}^\infty F_k(a,X,X;P)=1.
$$
This is the probability measure in Proposition~\ref{Al-Car}.
\end{remark}

\section{Main results} \label{main}

This section proves our main results about the distribution of the number of fixed vectors
of a random element of a finite classical group. We determine the limiting moments of these
distributions, and find precisely how large the rank of the group has to be in order
for the moment to stabilize to its limiting value.

Theorem \ref{GLmom} treats the general linear groups.

\begin{theorem}
\label{GLmom}
Let the random variable $Z_n$ be the number of fixed vectors of a random element of $GL(n,q)$
in its natural action. Then for all natural numbers $j$, and $n \geq j$, the expected value
of $Z_n^j$ is equal to the number of subspaces of a $j$-dimensional vector space over the
finite field $F_q$.
\end{theorem}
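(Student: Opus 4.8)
The plan is to reduce the moment computation to the double sum $D_n(P,X,Y)$ of Definition~\ref{doubledefn} and then invoke Proposition~\ref{mainprop}. The starting observation is that the fixed vectors of an element $g \in GL(n,q)$ are exactly the vectors in its fixed space, so if that space has dimension $k$ then $g$ fixes precisely $q^k$ vectors. Hence $Z_n = q^{d}$, where $d$ is the dimension of the fixed space, and for any $j$ we have $Z_n^j = q^{jd}$. Writing $p_k$ for the probability that a random element of $GL(n,q)$ has a $k$-dimensional fixed space, this gives
$$E(Z_n^j) = \sum_{k=0}^n q^{jk} \, p_k.$$

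First I would substitute the explicit formula for $p_k$ from part~1 of Theorem~\ref{RSGL}, rewriting every group order via $|GL(m,q)| = (-1)^m q^{\binom{m}{2}}(q;q)_m$. After cancelling the factors $(-1)^i q^{\binom{i}{2}}$ coming from the two copies of $|GL(i,q)|$, the inner sum collapses to $\sum_{i=0}^{n-k} q^{-ik}/(q;q)_i$, while the outer factor becomes $(-1)^k q^{jk}\big/\big(q^{\binom{k}{2}}(q;q)_k\big)$. Comparing with Definition~\ref{doubledefn} at $P=q$, one reads off
$$E(Z_n^j) = D_n(q,\, q^j,\, 1).$$

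Next I would apply Proposition~\ref{mainprop} with $P=q$ and $Y=1$, noting that $X = q^j = Y P^j$ is exactly the special value covered there. This yields
$$E(Z_n^j) = \sum_{J=0}^n \binom{j}{J}_q.$$
Finally, since $\binom{j}{J}_q = 0$ whenever $J > j$, the hypothesis $n \geq j$ lets me replace the upper limit $n$ by $j$, giving $\sum_{J=0}^{j} \binom{j}{J}_q$, which counts all subspaces of a $j$-dimensional space over $F_q$, i.e.\ the Galois number.

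Because Proposition~\ref{mainprop}, itself a consequence of the $q$-binomial theorem \eqref{qbt}, does the combinatorial heavy lifting, I do not expect a serious obstacle. The only delicate point is the bookkeeping in the second step: one must track the signs and powers of $q$ carefully so that the two factors of $|GL(i,q)|$ cancel and the parameters are correctly identified as $(X,Y) = (q^j, 1)$. A sign or exponent slip there would spoil the clean match with $D_n$, so this is where I would concentrate the verification.
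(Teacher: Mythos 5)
Your proposal is correct and follows essentially the same route as the paper's own proof: expressing $E[Z_n^j]$ via Theorem~\ref{RSGL}(1), identifying the result as $D_n(q,q^j,1)$, and applying Proposition~\ref{mainprop} together with the vanishing of $\binom{j}{J}_q$ for $J>j$. The only quibble is a phrasing slip in your bookkeeping step (there is just one factor of $|GL(i,q)|$ in the inner sum, whose $(-1)^i q^{\binom{i}{2}}$ part cancels against the numerator), but the resulting identification $(P,X,Y)=(q,q^j,1)$ is exactly right.
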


\begin{proof} Part 1 of Theorem \ref{RSGL} implies that
\begin{eqnarray*}
\begin{aligned}
E[Z_n^j] = & \sum_{k=0}^n P(Z_n=q^k) q^{jk}\\
= & \sum_{k=0}^n \frac{q^{jk}}{|GL(k,q)|} \sum_{i=0}^{n-k}
\frac{(-1)^i q^{{i \choose 2}}}{q^{ki} |GL(i,q)|} = D_n(q,q^j,1),
\end{aligned}
\end{eqnarray*}
where $D_n$ is given by Definition~\ref{doubledefn}.

Proposition~\ref{mainprop} implies
\[ D_n(q,q^j,1) = \sum_{J=0}^n \binom{j}{J}_q = \sum_{J=0}^{min(n,j)} \binom{j}{J}_q.\]
For $n \geq j$, this is the number of subspaces of a $j$-dimensional
vector space over the finite field $F_q$.
\end{proof}

\begin{remark} The number of subspaces of a $j$-dimensional vector space over the
finite field $F_q$ has been studied by Goldman and Rota \cite{GR}, who named these numbers
the Galois numbers $G_j$. They proved the recurrence

\[ G_{j+1} = 2G_j + (q^j-1) G_{j-1} .\]
\end{remark}

Another proof of Theorem \ref{GLmom} is in unpublished thesis work
\cite{F0} of the first named author, and goes as follows.

\begin{proof} Let $G=GL(n,q)$ and let $X$ be the product of $j$ copies of $V$, where $V$ is the
$n$-dimensional vector space on which $G$ acts. Let $G$ act on $X$ by acting separately
on each coordinate.

Consider the average number of fixed points of $G$ on $X$. On one hand, this is
the $j$th moment of the distribution of fixed vectors of $G$ on $V$. On the other
hand, by Burnside's lemma this is the number of orbits of $G$ on $X$. To each
orbit of $G$ on $X$, define an invariant $k$ of the orbit (called the number
of parts of the orbit) by taking any element $(v_1,\cdots,v_j)$ in the orbit
and letting $k$ be the dimension of the span of $v_1,\cdots,v_j$.

We claim that for $n \geq j$, the total number of orbits with invariant $k$
is equal to the $q$-binomial coefficient $\binom{j}{k}_q$, the number of $k$-dimensional
 subspaces of a $j$-dimensional vector space. This is proved bijectively.
Given an orbit, let $i_1,\cdots,i_k$ be the positions $i$ such that the
dimension of the span of $v_1,\cdots,v_i$ is one more than the dimension of
the span of $v_1,\cdots,v_{i-1}$. Let $(v_1,\cdots,v_j)$ be the unique element
of the orbit such that $v_{i_1},\cdots,v_{i_k}$ are the standard basis vectors
$e_1,\cdots,e_k$. Let $M$ be the $n \times j$ matrix whose columns are the
vectors $v_1,\cdots,v_j$. Let $M'$ be $M$ with the last $n-k$ rows chopped off,
so that $M'$ is a $k \times j$ matrix. Note that $M'$ is in reduced row-echelon
form, and hence by basic linear algebra corresponds to a unique $k$-dimensional
subspace of a $j$-dimensional space.
\end{proof}

Next we treat the unitary groups $U(n,q)$. Since $U(n,q)$ is viewed as a subgroup
of $GL(n,q^2)$, an element of $U(n,q)$ with a $k$-dimensional fixed space has $q^{2k}$ many
fixed vectors.

\begin{theorem} \label{Umom}
Let the random variable $Z_n$ be the number of fixed vectors of a random element of $U(n,q)$
in its natural action. Then for all natural numbers $j$, and $n \geq 2j$, the expected value
of $Z_n^j$ is equal to \[ \prod_{i=1}^j (q^{2i-1}+1) .\]
\end{theorem}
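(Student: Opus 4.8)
The plan is to follow exactly the template established in the proof of Theorem~\ref{GLmom}, reducing the moment computation to an evaluation of the double sum $D_n(P,X,Y)$ of Definition~\ref{doubledefn} and then applying Proposition~\ref{mainprop}. First I would write the $j$th moment as a sum over $k$ of $P(Z_n = q^{2k}) \cdot (q^{2k})^j$, since an element of $U(n,q)$ with a $k$-dimensional fixed space has $q^{2k}$ fixed vectors. Substituting the explicit formula from part 1 of Theorem~\ref{RSU}, I get
\[
E[Z_n^j] = \sum_{k=0}^n \frac{q^{2jk}}{|U(k,q)|} \sum_{i=0}^{n-k} \frac{(-1)^i (-q)^{{i \choose 2}}}{(-q)^{ki} |U(i,q)|}.
\]
The key observation is that with $Q = -q$ we have $|U(m,q)| = Q^{\binom{m}{2}}(Q;Q)_m$, and $(-1)^i = Q^i/q^i$; more to the point, $q^{2jk} = (q^2)^{jk} = Q^{2jk}$, so the whole expression should collapse into the form $D_n(Q, X, Y)$ for appropriate $X,Y$ with $P = Q = -q$.

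The main task, and the step I expect to be the main obstacle, is bookkeeping the signs and powers of $q$ carefully enough to identify the correct $X$ and $Y$. Matching the inner sum $\sum_i P^{-ik}Y^i/(P;P)_i$ of $D_n$ against $\sum_i (-1)^i (-q)^{\binom{i}{2}}/((-q)^{ki}|U(i,q)|)$ requires absorbing the extra $(-1)^i (-q)^{\binom{i}{2}}$ against the $Q^{\binom{i}{2}}$ hidden in $|U(i,q)|$, which forces a specific value of $Y$ (I expect $Y=1$ after the $Q^{\binom{i}{2}}$ factors cancel, leaving the $(-1)^i = Q^i/q^i$ to be accounted for). Similarly the outer factor $q^{2jk}/|U(k,q)|$ must be matched against $(-X)^k/(P^{\binom{k}{2}}(P;P)_k)$, which should pin down $X$ as a power of $Q=-q$ times a power of $q$. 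I anticipate the outcome is $D_n(-q, X, Y)$ with $X = Y\,(-q)^{j'}$ for some integer $j'$ read off from the exponent arithmetic, so that Proposition~\ref{mainprop} (or its refinement) applies directly with base $P=-q$.

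Once the moment is written as $\sum_{J} \binom{j'}{J}_{-q} Y^J$ via Proposition~\ref{mainprop}, the final step is to evaluate this sum in closed form. Here I would invoke part 1 of Proposition~\ref{andrews}, the identity $\sum_{J=0}^{2j}\binom{2j}{J}_q (-1)^J = (q;q^2)_j$, which is stated in the excerpt precisely because ``it occurs for the unitary groups.'' This strongly suggests the relevant parameter is $j'=2j$ and $Y=-1$ (or that the $-q$ base together with the sign of $Y$ produces exactly the alternating $q$-binomial sum of Proposition~\ref{andrews}(1)), yielding the product $(q;q^2)_j = \prod_{i=1}^j (1 - q^{2i-1})$ up to normalization. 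To reconcile this with the target $\prod_{i=1}^j (q^{2i-1}+1)$, I would track the overall sign and the effect of the base being $-q$ rather than $q$: replacing $q$ by $-q$ in $(q;q^2)_j$ turns $1-q^{2i-1}$ into $1+q^{2i-1}$, which matches $\prod_{i=1}^j(q^{2i-1}+1)$ exactly. The stabilization range $n \geq 2j$ (rather than $n \geq j$ as in the $GL$ case) is explained by the truncation: the sum $\sum_{J=0}^{\min(n,2j)}\binom{2j}{J}_{-q}$ equals the full sum over $J=0,\dots,2j$ precisely when $n \geq 2j$, so I would note that for $n \geq 2j$ the upper limit becomes inactive and Proposition~\ref{andrews}(1) gives the stated value.
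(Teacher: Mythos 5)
Your proposal is correct and is essentially identical to the paper's proof: the moment is $D_n(Q,-Q^{2j},-1)$ with $Q=-q$, Proposition~\ref{mainprop} turns it into $\sum_{J=0}^n\binom{2j}{J}_Q(-1)^J$, and Proposition~\ref{andrews}(1) with base $Q$ gives $(Q;Q^2)_j=\prod_{i=1}^j(q^{2i-1}+1)$ once $n\geq 2j$. Your only wobble is the initial guess $Y=1$, which you correctly revise to $Y=-1$; the bookkeeping works out exactly as you anticipate.
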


\begin{proof} Let $Q=-q$. Theorem~\ref{RSU}(1) yields
\begin{eqnarray*}
E[Z_n^j] & = & \sum_{k=0}^n \frac{q^{2jk}}{|U(k,q)|} \sum_{i=0}^{n-k}
\frac{(-1)^i (-q)^{{i \choose 2}}}{(-q)^{k
i} |U(i,q)|}
 =D_n(Q,-Q^{2j},-1).
\end{eqnarray*}
By Proposition~\ref{mainprop} and Proposition~\ref{andrews}(1) we have
$$
\begin{aligned}
D_n(Q,-Q^{2j},-1) &=\sum_{J=0}^n  \binom{2j}{J}_Q(-1)^J \\
&= (Q;Q^2)_j = \prod_{i=1}^j (q^{2i-1}+1) {\text{ if $n\ge 2j$.}}
\end{aligned}
$$
\end{proof}

Next we treat the symplectic groups.

\begin{theorem}
\label{Spmom}
Let the random variable $Z_n$ be the number of fixed vectors of a random element of $Sp(2n,q)$
in its natural action. Then for all natural numbers $j$, and $n \geq j$, the expected value
of $Z_n^j$ is equal to \[ \prod_{i=1}^j (q^{i-1}+1) .\]
\end{theorem}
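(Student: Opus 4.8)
The plan is to mirror the proofs of Theorems~\ref{GLmom} and~\ref{Umom}, but now the fixed space of an element of $Sp(2n,q)$ can have either even or odd dimension, so I would split $E[Z_n^j]$ into two pieces coming from parts~1 and~2 of Theorem~\ref{RSSp}. Writing $R=q^2$ and using $|Sp(2m,q)| = (-1)^m q^{m^2}(R;R)_m$, a $2k$-dimensional fixed space contributes $q^{2k}$ fixed vectors and a $(2k+1)$-dimensional fixed space contributes $q^{2k+1}$, so I would write $E[Z_n^j] = E_{\mathrm{even}} + E_{\mathrm{odd}}$, where $E_{\mathrm{even}} = \sum_k q^{2jk}\, P(\dim = 2k)$ and $E_{\mathrm{odd}} = \sum_k q^{(2k+1)j}\, P(\dim = 2k+1)$.

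First I would massage each piece into the shape of Definition~\ref{doubledefn}. For the even piece, substituting the shifted-factorial form of $|Sp(\cdot,q)|$ and simplifying the exponents via $q^{i(i+1)}/q^{i^2} = q^i$ should give $E_{\mathrm{even}} = D_n(R, q^{2j-1}, q)$. For the odd piece, after pulling out the $k$- and $i$-independent factor $q^{j-1}$, the remaining double sum (whose inner index runs to $n-k-1$) should be $q^{j-1} D_{n-1}(R, q^{2j-3}, q^{-1})$. The crucial observation is that in both cases $X = Y P^{j-1}$: indeed $q^{2j-1} = q\cdot (q^2)^{j-1}$ and $q^{2j-3} = q^{-1}\cdot (q^2)^{j-1}$. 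Thus both double sums fall under the $j-1$ instance of Proposition~\ref{mainprop}, giving $E_{\mathrm{even}} = \sum_{J=0}^n \binom{j-1}{J}_{q^2} q^J$ and $E_{\mathrm{odd}} = q^{j-1}\sum_{J=0}^{n-1}\binom{j-1}{J}_{q^2} q^{-J}$.

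Now I would invoke stabilization: for $n \geq j$ we have $n-1 \geq j-1$, so both sums contain every nonvanishing term $\binom{j-1}{J}_{q^2}$ (those with $0 \le J \le j-1$), and I can replace the upper limits by $j-1$. Applying Proposition~\ref{andrews}(2) with $q$ replaced by $q^2$ evaluates the even sum as $(-q;q)_{j-1} = \prod_{i=1}^{j-1}(1+q^i)$, while Proposition~\ref{andrews}(3) with $q$ replaced by $q^2$ gives $\sum_{J=0}^{j-1}\binom{j-1}{J}_{q^2}q^{-J} = (-q;q)_{j-1}/q^{j-1}$, so that the prefactor $q^{j-1}$ cancels and $E_{\mathrm{odd}} = \prod_{i=1}^{j-1}(1+q^i)$ as well. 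Adding the two equal contributions yields $2\prod_{i=1}^{j-1}(1+q^i)$, which equals $\prod_{i=1}^j(q^{i-1}+1)$ because the $i=1$ factor on the right is $q^0+1=2$. (As a sanity check, $j=1$ gives $E[Z_n]=2$, matching the two orbits of $Sp(2n,q)$ on $V$.)

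The main obstacle will be the bookkeeping in the second paragraph: correctly tracking the powers of $q$ when converting $|Sp(2m,q)|$ into $R$-shifted factorials and recognizing the two double sums as instances of $D_n$ and $D_{n-1}$. In particular the odd piece's inner sum runs only to $n-k-1$, which is why it is governed by $D_{n-1}$ rather than $D_n$; this is precisely what forces the sharp hypothesis $n \geq j$ (rather than $n \geq j-1$), since the odd sum needs $n-1 \geq j-1$ in order to stabilize. The remaining steps — matching to the $j-1$ case of Proposition~\ref{mainprop} and reading off the evaluations from Proposition~\ref{andrews} — are then routine.
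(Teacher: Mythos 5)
Your proposal is correct and follows essentially the same route as the paper: both split $E[Z_n^j]$ into the even- and odd-dimensional contributions, identify them as $D_n(R,q^{2j-1},q)$ and $q^{j-1}D_{n-1}(R,q^{2j-3},q^{-1})$ with $R=q^2$, apply Proposition~\ref{mainprop} with parameter $j-1$, and evaluate via Proposition~\ref{andrews}(2),(3) to get $2(-q;q)_{j-1}$. The only difference is that the paper also runs the (trivial, since probabilities sum to $1$) case $j=0$ through Proposition~\ref{miniprop}, whereas your argument implicitly assumes $j\ge 1$ so that $j-1$ is a valid parameter for Proposition~\ref{mainprop}.
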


\begin{proof} We treat the cases $j=0$ and $j>0$ separately.

Suppose first that $j=0$. From parts 1 and 2 of Theorem \ref{RSSp}, there
are two double sums to consider, and let $R=q^2$. The first
double sum is
\begin{eqnarray*}
 \sum_{k=0}^n \frac{1}{|Sp(2k,q)|} \sum_{i=0}^{n-k} \frac{(-1)^i q^{i(i+1)}}{|Sp(2i,q)| q^{2ik}}
=  D_n(R,q^{-1},q)=\sum_{J=0}^n (-1)^J q^{-J^2}
\end{eqnarray*}
where we have used Proposition~\ref{miniprop}.

The second double sum is
\begin{eqnarray*}
 \sum_{k=0}^{n-1} \frac{1}{q^{2k+1} |Sp(2k,q)|} \sum_{i=0}^{n-k-1} \frac{(-1)^i q^{i(i+1)}}{q^{2i(k+1)} |Sp(2i,q)|}
 =\frac{1}{q} D_{n-1}(R,q^{-3},q^{-1}).
\end{eqnarray*}
which by Proposition~\ref{miniprop} is
\begin{eqnarray*}
\sum_{J=0}^{n-1} (-1)^J q^{-(J+1)^2}.
\end{eqnarray*}
The $j=0$ case of the theorem now follows since
\[ \sum_{J=0}^n (-1)^J q^{-J^2} + \sum_{J=0}^{n-1} (-1)^J q^{-(J+1)^2} = 1 .\]

Next we consider the case that $j \geq 1$.
Again by parts 1 and 2 of Theorem \ref{RSSp}, there are two double sums.
The first double sum is
\begin{eqnarray*}
& & \sum_{k=0}^n \frac{q^{2kj}}{|Sp(2k,q)|} \sum_{i=0}^{n-k} \frac{(-1)^i q^{i(i+1)}}{|Sp(2i,q)|q^{2ik}} =
D_n(R,q^{2j-1},q) =\sum_{J=0}^n \binom{j-1}{J}_R R^{\frac{J}{2}}.
\end{eqnarray*}
By Proposition~\ref{andrews}(2), this is equal to $(-q;q)_{j-1}$ if $n \geq j-1$.

Again using Proposition~\ref{mainprop} and Proposition~\ref{andrews}(3),
the second double sum is
$$
\begin{aligned}
 &\sum_{k=0}^{n-1} \frac{q^{(2k+1)j}}{|Sp(2k,q)| q^{2k+1}}  \sum_{i=0}^{n-k-1}
 \frac{(-1)^i q^{i(i+1)}}{q^{2i(k+1)}} \frac{1}{|Sp(2i,q)|} \\
 &= q^{j-1} D_{n-1}(R,q^{2j-3},q^{-1})
 =q^{j-1} \sum_{J=0}^{n-1} \binom{j-1}{J}_R R^{-J/2}\\
 & = (-q;q)_{j-1} { \text{ if $n\ge j$.}}
\end{aligned}
$$

The theorem now follows since if $n\ge j,$
\[ E[Z_n^j] = 2 (-q;q)_{j-1} = 2(1+q) \cdots (1+q^{j-1}) = \prod_{i=1}^j (q^{i-1}+1).\]
\end{proof}

\begin{remark} The $j=0$ case of Theorem~\ref{Spmom} follows immediately since
any probability distribution sums to $1$. However we feel that the proof
of this case in the proof of Theorem~\ref{Spmom} is of interest.
\end{remark}

Next we treat the odd characteristic, odd dimensional orthogonal groups $O(2n+1,q)$. As explained just
before the proof of Theorem \ref{RSOodd}, there is no need to treat odd dimensional orthogonal groups
in even characteristic.

\begin{theorem}
\label{OddOmom}
Suppose that the characteristic is odd.
Let the random variable $Z_n$ be the number of fixed vectors of a random element of $O(2n+1,q)$
in its natural action. Then for all natural numbers $j$, and $n \geq j$, the expected value
of $Z_n^j$ is equal to \[ \prod_{i=1}^j (q^i+1) .\]
\end{theorem}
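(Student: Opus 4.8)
The plan is to follow the template of the preceding proofs: split $E[Z_n^j]$ according to the parity of the fixed-space dimension, recognize each of the two resulting double sums as an instance of $D_n(P,X,Y)$ from Definition~\ref{doubledefn} with $P=R=q^2$, evaluate with Proposition~\ref{mainprop}, and finish using the Andrews identities of Proposition~\ref{andrews}. Since an element of $O(2n+1,q)$ with an $m$-dimensional fixed space fixes exactly $q^m$ vectors, parts 1 and 2 of Theorem~\ref{RSOodd} give
\[ E[Z_n^j] = \sum_{k=0}^n q^{2kj}\,P(Z_n=q^{2k}) + \sum_{k=0}^n q^{(2k+1)j}\,P(Z_n=q^{2k+1}) =: S_1+S_2. \]

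First I would convert the Rudvalis--Shinoda factors into $q$-shifted factorials in the base $R=q^2$; the identity $\prod_{m=1}^k(1-q^{-2m}) = (-1)^k R^{-k(k+1)/2}(R;R)_k$ (and its analogue for the inner index $i$) is the key bookkeeping step. Substituting this and collecting the competing powers of $q$, I expect $S_1$ to collapse to $\tfrac12 D_n(R,q^{2j+1},q)$ and $S_2$ to $\tfrac12 q^{j} D_n(R,q^{2j-1},q^{-1})$. In both cases the ratio $X/Y$ equals $R^{j}$, so Proposition~\ref{mainprop} applies with exponent $j\ge 0$; in particular, unlike the symplectic proof, no separate $j=0$ argument and no appeal to the $j=-1$ form of Proposition~\ref{miniprop} is needed. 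This yields $S_1=\tfrac12\sum_{J=0}^n\binom{j}{J}_R R^{J/2}$ and $S_2=\tfrac12 q^{j}\sum_{J=0}^n\binom{j}{J}_R R^{-J/2}$.

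For $n\ge j$ both sums truncate at $J=j$, so Proposition~\ref{andrews}(2) evaluates the first as $(-R^{1/2};R^{1/2})_j=(-q;q)_j$ and Proposition~\ref{andrews}(3) evaluates the second as $(-q;q)_j/q^{j}$. The factor $q^{j}$ in $S_2$ then cancels, giving $S_1=S_2=\tfrac12(-q;q)_j$ and hence $E[Z_n^j]=(-q;q)_j=\prod_{i=1}^j(q^i+1)$, as claimed. The main obstacle is entirely the algebraic normalization of the second paragraph: forcing the two probability formulas, with their floating signs $(-1)^k$ and several interacting powers of $q$, into the rigid shape of $D_n(P,X,Y)$, and verifying that the parameters emerge so that both halves become single $q$-binomial sums of exactly the Andrews type. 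The hypothesis $n\ge j$ enters precisely at the truncation step, which is what makes it the natural threshold.
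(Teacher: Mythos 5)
Your proposal is correct and follows exactly the paper's own argument: the paper likewise writes the two parity contributions as $\tfrac12 D_n(R,q^{2j+1},q)$ and $\tfrac12 q^j D_n(R,q^{2j-1},q^{-1})$ with $R=q^2$, evaluates them via Proposition~\ref{mainprop} and Proposition~\ref{andrews}(2),(3), and adds the two equal halves to get $(-q;q)_j$. Your side remarks (the bookkeeping identity $(R^{-1};R^{-1})_k=(-1)^kR^{-k(k+1)/2}(R;R)_k$, the observation that $X/Y=R^j$ so no separate $j=0$ case is needed, and that $n\ge j$ enters only at the truncation step) are all accurate and consistent with the paper's abbreviated presentation.
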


\begin{proof} The proof is nearly the same as that of Theorem~\ref{Spmom}, so we abbreviate the details.
From parts 1 and 2 of Theorem \ref{RSOodd}, there are two double sums, and we let $R=q^2$.
The first double sum is
\begin{eqnarray*}
 \frac{1}{2} D_n(R,q^{2j+1},q) =\frac{1}{2} \sum_{J=0}^n \binom{j}{J}_R R^{J/2}.
\end{eqnarray*}
If $n \geq j$, then this is equal to $\frac{1}{2} (-q;q)_j$ by Proposition~\ref{andrews}(2).

The second double sum is
\begin{eqnarray*}
 \frac{1}{2} q^j D_n(R,q^{2j-1},1/q)= \frac{1}{2} q^j \sum_{J=0}^n \binom{j}{J}_R R^{-J/2}.
\end{eqnarray*}
If $n \geq j$, then this is equal to $\frac{1}{2}(-q;q)_j$ by Proposition~\ref{andrews}(3).

So the two double sums are equal, and adding their values gives the required
$(-q;q)_j = \prod_{i=1}^j (q^i+1).$
\end{proof}

Next we consider even dimensional orthogonal groups. As mentioned in Section \ref{RSreview}, the
formulas are the same in even and odd characteristic. We first treat $O^+(2n,q)$, and then
treat $O^-(2n,q)$. Note that the moments stabilize more quickly for $O^+(2n,q)$ than for
$O^-(2n,q)$.

\begin{theorem}
\label{evenO+}
Let the random variable $Z_n$ be the number of fixed vectors of a
random element of $O^+(2n,q)$ in its natural action. Then for all natural numbers $j$, and
$n \geq j$, the expected value of $Z_n^j$ is equal to \[ \prod_{i=1}^j (q^i+1) .\]
\end{theorem}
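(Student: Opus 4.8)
The plan is to follow the template of the proofs of Theorems~\ref{Spmom} and~\ref{OddOmom}: write $E[Z_n^j]=\sum_m q^{jm}\,P(\dim\mathrm{Fix}=m)$ and split the sum according to whether the fixed space is even- or odd-dimensional, since an $m$-dimensional fixed space means $q^m$ fixed vectors. Setting $R=q^2$ and using $|GL(k,q^2)|=(-1)^kR^{\binom{k}{2}}(R;R)_k$ together with $(q^{2i}-1)\cdots(q^2-1)=(-1)^i(R;R)_i$ and $(1-1/q^2)\cdots(1-1/q^{2i})=(-1)^i(R;R)_i/R^{\binom{i+1}{2}}$, I expect the even-dimensional contribution coming from the \emph{first} (double-sum) term of Theorem~\ref{RSOeven}(1) to simplify to $\tfrac12 D_n(R,q^{2j+1},q)$, and the odd-dimensional contribution from Theorem~\ref{RSOeven}(2) to simplify to $\tfrac{q^j}{2}D_{n-1}(R,q^{2j-1},q^{-1})$, where $D$ is as in Definition~\ref{doubledefn}.

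Both of these double sums have the shape $D(R,YR^j,Y)$ required by Proposition~\ref{mainprop} (with $Y=q$ and $Y=q^{-1}$ respectively), so they become $\tfrac12\sum_{J=0}^n\binom{j}{J}_R R^{J/2}$ and $\tfrac{q^j}{2}\sum_{J=0}^{n-1}\binom{j}{J}_R R^{-J/2}$. Evaluating these with Proposition~\ref{andrews}(2) and~(3) (applied with $q\to R$ and upper index $j$) gives $\tfrac12(-q;q)_j$ in each case, \emph{provided} the truncated sum reaches the full range $0\le J\le j$. For the even term this holds as soon as $n\ge j$; but the odd term runs only to $J=n-1$, so it is complete only when $n\ge j+1$. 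At the boundary $n=j$ the odd term is short exactly its top summand $\tfrac{q^j}{2}\binom{j}{j}_R R^{-j/2}=\tfrac12$.

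The genuinely new ingredient is the \emph{second} (``boundary'') term of Theorem~\ref{RSOeven}(1), which carries the sign $+$ for $O^+(2n,q)$. I would show that its total contribution to the moment is
\[
 C_n=\frac{1}{2(R;R)_n}\sum_{k=0}^{n}\binom{n}{k}_R(-1)^kR^{\binom{k}{2}}\bigl(R^{\,j-n+1}\bigr)^k=\frac{(R^{\,j-n+1};R)_n}{2(R;R)_n},
\]
where the closed form is obtained from the $q$-binomial theorem \eqref{qbt} with $P=R$, $J=n$, and $Z=R^{\,j-n+1}$. Since $(R^{\,j-n+1};R)_n=\prod_{m=j-n+1}^{j}(1-R^m)$, this product contains the vanishing factor $1-R^0$ whenever $n\ge j+1$, so $C_n=0$ there, whereas for $n=j$ the product is $(R;R)_j$ and hence $C_n=\tfrac12$.

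Finally I would assemble the three pieces. For $n\ge j+1$ the even main term and the (now complete) odd term each contribute $\tfrac12(-q;q)_j$ while $C_n=0$, summing to $(-q;q)_j$. For $n=j$ the odd term falls short by $\tfrac12$, but $C_n=\tfrac12$ compensates exactly, again yielding $(-q;q)_j=\prod_{i=1}^{j}(q^i+1)$. The main obstacle is precisely this boundary bookkeeping: the argument succeeds only because the $q$-binomial evaluation of the boundary term produces exactly the single-sum term that the odd-dimensional contribution is missing at $n=j$, so the two non-generic phenomena cancel and the moment stabilizes already at $n=j$.
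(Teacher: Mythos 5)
Your proposal is correct and follows essentially the same route as the paper's own proof: the same decomposition into the two double sums $\tfrac12 D_n(R,q^{2j+1},q)$ and $\tfrac{q^j}{2}D_{n-1}(R,q^{2j-1},q^{-1})$ evaluated via Propositions~\ref{mainprop} and~\ref{andrews}, plus the boundary single sum closed by the $q$-binomial theorem \eqref{qbt} to $\tfrac{1}{2}(R^{j+1-n};R)_n/(R;R)_n$, with the identical cancellation at $n=j$ between the missing top summand of the odd-dimensional term and the boundary term. No gaps.
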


\begin{proof} From parts 1 and 2 of Theorem \ref{RSOeven}, there are two double sums and a
single sum, and we let $R=q^2$.

The first double sum is:
\begin{eqnarray*}
\frac{1}{2} D_n(R,q^{2j+1},q),
\end{eqnarray*} which is equal to $\frac{1}{2} (-q;q)_j$ if $n \geq j$.

Next, for the single sum one computes that
\begin{eqnarray*}
& & \frac{1}{2} \sum_{k=0}^n \frac{(-1)^{n-k} q^{2kj}}{q^{2k(n-k)} |GL(k,q^2)| (q^{2(n-k)}-1) \cdots (q^2-1)} \\
& = & \frac{1}{2} \frac{1}{(R;R)_n} \sum_{k=0}^n \binom{n}{k}_R (-1)^k R^{{k \choose 2}} (R^{j+1-n})^k \\
& = & \frac{1}{2} \frac{1}{(R;R)_n} (R^{j+1-n};R)_n,
\end{eqnarray*} where the last step used the $q$-binomial theorem (\ref{qbt}). Note that
\[ \frac{1}{2} \frac{1}{(R;R)_n} (R^{j+1-n};R)_n \] is equal to $\frac{1}{2}$ if $n=j$, and to $0$ if $n>j$.
Summarizing, the even dimensional fixed spaces contribute
\[ \left\{ \begin{array}{ll}
\frac{1}{2} \left[ (-q;q)_j + 1 \right] & \mbox{if $n=j$}\\
\frac{1}{2} \left[ (-q;q)_j \right] & \mbox{if $n>j$}\end{array}
                        \right.  \]

Next consider the contribution from the odd dimensional fixed spaces in Theorem~\ref{RSOeven}.
It is
\begin{eqnarray*}
\frac{q^j}{2} D_{n-1}(R,q^{2j-1},q^{-1})=\frac{q^j}{2} \sum_{J=0}^{n-1} \binom{j}{J}_R R^{-J/2}.
\end{eqnarray*}
By Proposition~\ref{andrews}(3),
this is equal to $\frac{1}{2}(-q;q)_j$ if $n>j$, and to $\frac{1}{2}[(-q;q)_j-1] $ if $n=j$.
Summarizing, this contribution is
\[ \left\{ \begin{array}{ll}
\frac{1}{2} \left[ (-q;q)_j - 1 \right] & \mbox{if $n=j$}\\
\frac{1}{2} \left[ (-q;q)_j \right] & \mbox{if $n>j$}\end{array}
                        \right.  \]

Adding the values of the two contributions completes the proof.
\end{proof}

\begin{theorem} \label{evenO-} Let the random variable $Z_n$ be the number of fixed vectors of a
random element of $O^-(2n,q)$ in its natural action. Then for all natural numbers $j$, and
$n \geq j+1$, the expected value of $Z_n^j$ is equal to \[ \prod_{i=1}^j (q^i+1) .\]
\end{theorem}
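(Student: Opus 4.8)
The plan is to follow the proof of Theorem~\ref{evenO+} essentially verbatim, using parts 1 and 2 of Theorem~\ref{RSOeven} with the lower ($-$) sign and again setting $R=q^2$. The only structural change is the sign attached to the single-sum term in Theorem~\ref{RSOeven}(1), and tracking that sign is exactly what forces the threshold $n\ge j+1$ in place of $n\ge j$.

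First I would treat the even-dimensional fixed spaces. The double-sum part is identical to the $O^+$ computation: it equals $\frac{1}{2}D_n(R,q^{2j+1},q)$, which by Proposition~\ref{mainprop} and Proposition~\ref{andrews}(2) equals $\frac{1}{2}(-q;q)_j$ whenever $n\ge j$. The single sum, now carrying the $-$ sign, becomes $-\frac{1}{2}\frac{1}{(R;R)_n}(R^{j+1-n};R)_n$ after an application of the $q$-binomial theorem~\eqref{qbt}; this quantity is $-\frac{1}{2}$ when $n=j$ and $0$ when $n>j$. Hence the even-dimensional fixed spaces contribute $\frac{1}{2}[(-q;q)_j-1]$ if $n=j$ and $\frac{1}{2}(-q;q)_j$ if $n>j$.

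Next I would treat the odd-dimensional fixed spaces. Because the odd-dimensional formula in Theorem~\ref{RSOeven}(2) carries no $\pm$ sign, this contribution is word-for-word the same as in the $O^+$ case: it equals $\frac{q^j}{2}D_{n-1}(R,q^{2j-1},q^{-1})$, which by Proposition~\ref{andrews}(3) is $\frac{1}{2}(-q;q)_j$ if $n>j$ and $\frac{1}{2}[(-q;q)_j-1]$ if $n=j$. Adding the two contributions, for $n\ge j+1$ each piece equals $\frac{1}{2}(-q;q)_j$, so the total is $(-q;q)_j=\prod_{i=1}^j(q^i+1)$, as required.

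The point to watch---rather than a genuine obstacle---is the boundary case $n=j$. There the sign flip on the single sum makes the even-dimensional contribution $\frac{1}{2}[(-q;q)_j-1]$ instead of the $\frac{1}{2}[(-q;q)_j+1]$ that appeared for $O^+$; combined with the unchanged odd-dimensional deficit of $\frac{1}{2}[(-q;q)_j-1]$, the total at $n=j$ comes out to $(-q;q)_j-1$, one short of the limiting value. This is precisely why stabilization for $O^-(2n,q)$ is delayed by one step to $n\ge j+1$, in contrast to Theorem~\ref{evenO+}.
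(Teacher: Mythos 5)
Your proposal is correct and is exactly the argument the paper intends: its proof of Theorem~\ref{evenO-} simply says it is ``very similar'' to that of Theorem~\ref{evenO+}, and you have filled in that outline faithfully, with the sign flip on the single-sum term and the resulting deficit of $1$ at $n=j$ correctly accounting for the delayed stabilization threshold $n\ge j+1$.
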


\begin{proof} The proof is very similar to that of Theorem~\ref{evenO+}.
\end{proof}

\section{Connections to classical orthogonal polynomials}
\label{orthog}

The Charlier polynomials $C_n(x;a)$ are orthogonal with respect to the Poisson distribution
$$
w_{Char}(k,a)= e^{-a} \frac{a^k}{k!}, \quad k=0,1, \cdots .
$$
In this section we shall explain
how the $n\to\infty$ limiting case of the moments described by
Theorems~\ref{GLmom}-\ref{evenO-} are related to $q$-analogues of Charlier polynomials.
We shall use using Al-Salam-Carlitz polynomials and the
$q$-Charlier polynomials.

First we use the Al-Salam-Carlitz polynomials, see \cite[p. 195-198]{Ch}.

\begin{prop}
\label{Al-Car}
Let $0<p<1$ and $a>0.$ The Al-Salam-Carlitz polynomials $V_n^{(a)}(x;p)$ are
orthogonal with respect to the discrete probability measure which is
supported on the sequence $p^{-k}$ with masses of
$$
w_{AC}(p^{-k};a;p)\ =\ (ap;p)_\infty \frac{p^{k^2}a^k}{(p;p)_k (ap;p)_k}, \quad k=0,1,\cdots, .
$$
The moments $\mu_j$ of this measure are given by \cite[(10.10)]{Ch}
$$
\mu_j=\sum_{k=0}^j \binom{j}{k}_{1/p} a^k.
$$
\end{prop}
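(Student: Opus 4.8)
The plan is to treat the statement as three separate assertions and reduce all of them to identities already established in Section~\ref{identities}. The orthogonality of the $V_n^{(a)}(x;p)$ with respect to a discrete measure supported on $\{p^{-k}\}$ is the classical defining property of the Al-Salam--Carlitz polynomials, so for that part I would simply invoke \cite{Ch}; the content to be verified is that the weights $w_{AC}(p^{-k};a;p)$ are nonnegative and sum to $1$, and that the moments are as claimed. The key observation driving everything is that these weights are exactly the summands of the generating function in Proposition~\ref{genfunc}.

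First I would identify the weights. Setting $P=1/p$ and $X=Y=1$ in Proposition~\ref{genfunc} gives $F_k(a,1,1;1/p)=(ap;p)_\infty\, p^{k^2}a^k/[(p;p)_k(ap;p)_k]=w_{AC}(p^{-k};a;p)$. Nonnegativity of the weights is immediate for $0<a<1$ and $0<p<1$, since then every factor $1-ap^i$ and $1-p^i$ is positive. That they sum to $1$ will come out as the $j=0$ case of the moment computation below (equivalently, from the $q$-Gauss evaluation recorded in the Remark following Proposition~\ref{genfunc}), so the measure is a probability measure.

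Next I would compute the moments directly from the definition $\mu_j=\sum_{k\ge 0}(p^{-k})^j w_{AC}(p^{-k};a;p)$. The crucial step is to absorb the factor $p^{-kj}$ back into the generating function: comparing exponents shows that $p^{-kj}\,w_{AC}(p^{-k};a;p)=F_k(a,p^{-j},1;1/p)$, i.e. the same parameters as before but with $X=p^{-j}$ in place of $X=1$. Summing over $k$ and using $\sum_k F_k(a,X,Y;P)=(1-a)\sum_{n\ge 0}D_n(P,X,Y)a^n$ (which follows by interchanging the order of summation in the definitions of $F_k$ and $D_n$), I obtain $\mu_j=(1-a)\sum_{n\ge 0}D_n(1/p,p^{-j},1)\,a^n$. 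Since $p^{-j}=(1/p)^j=P^j$ with $P=1/p$, Proposition~\ref{mainprop} evaluates $D_n(1/p,p^{-j},1)=\sum_{J=0}^{\min(n,j)}\binom{j}{J}_{1/p}$.

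Finally I would extract the answer by a telescoping argument: writing $c_n=\sum_{J=0}^{\min(n,j)}\binom{j}{J}_{1/p}$, the prefactor $(1-a)$ turns $\sum_n c_n a^n$ into $c_0+\sum_{n\ge 1}(c_n-c_{n-1})a^n$, where $c_n-c_{n-1}=\binom{j}{n}_{1/p}$ for $1\le n\le j$ and $c_n-c_{n-1}=0$ for $n>j$. This collapses the series to $\mu_j=\sum_{k=0}^j\binom{j}{k}_{1/p}a^k$, as required; the $j=0$ case gives $\mu_0=1$, confirming the normalization. The main obstacle is not any single calculation but spotting the two substitutions --- $X=Y=1$ for the total mass and $X=p^{-j}$ for the $j$th moment --- that turn the moment problem into instances of Propositions~\ref{genfunc} and~\ref{mainprop}; once these are in place, only convergence (guaranteed by $0<a<1$ and $0<p<1$) and the routine telescoping remain, while the orthogonality itself is quoted from \cite{Ch}.
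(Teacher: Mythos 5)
Your argument is correct, but it is genuinely different from what the paper does: the paper offers no proof of Proposition~\ref{Al-Car} at all, simply quoting Chihara \cite[(10.10)]{Ch} for the moment formula and the standard theory of the Al-Salam--Carlitz polynomials for the orthogonality and normalization (the only internal hint is the Remark after Proposition~\ref{genfunc}, which notes that $\sum_k F_k(a,X,X;P)=1$ by a limiting $q$-Gauss sum and identifies this with the measure of Proposition~\ref{Al-Car}). Your derivation instead builds the moment formula out of the paper's own machinery, and every step checks out: with $P=1/p$ the closed form in Proposition~\ref{genfunc} depends on $X$ only through the factor $(aX)^k$, so $F_k(a,P^j,1;P)=p^{-kj}\,w_{AC}(p^{-k};a;p)$ exactly as you claim; interchanging the two absolutely convergent sums gives $\sum_k F_k(a,X,Y;P)=(1-a)\sum_n D_n(P,X,Y)a^n$; Proposition~\ref{mainprop} with $Y=1$ evaluates $D_n(P,P^j,1)=\sum_{J=0}^{\min(n,j)}\binom{j}{J}_P$ (the $q$-binomial coefficient vanishing for $J>j$); and the $(1-a)$ telescoping correctly extracts $\mu_j=\sum_{k=0}^j\binom{j}{k}_{1/p}a^k$, with $j=0$ giving the normalization. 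What this buys is a self-contained proof of the moment and mass claims (the orthogonality itself must still be quoted, as you do), and it makes explicit the link between the $F_k$ generating function and the Al-Salam--Carlitz weight that the paper only uses later in Section~\ref{orthog}. One small point worth a sentence in a final write-up: Proposition~\ref{genfunc} is stated for $0<a<1$, while Proposition~\ref{Al-Car} asserts the moment formula for $a>0$; since $\mu_j$ and $\sum_{k=0}^j\binom{j}{k}_{1/p}a^k$ are both analytic in $a$ (the weight decays like $p^{k^2}$, so all moments converge for every $a$), the identity extends from $0<a<1$ to all $a>0$ by analytic continuation, and one should also note that positivity of the weights actually requires $ap<1$.
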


We see that the limiting measure in Theorem~\ref{RSGL} corresponds to
$p=1/q$ and $a=1$ in Proposition~\ref{Al-Car}.

\begin{cor} The moments for the Al-Salam-Carlitz polynomials for $p=1/q$ and
$a=1$ are
the Galois numbers
$$
\mu_j=G_j.
$$
\end{cor}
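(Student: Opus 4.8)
The plan is to substitute the specific parameter values $p=1/q$ and $a=1$ directly into the moment formula supplied by Proposition~\ref{Al-Car}, and then to recognize the resulting sum as a Galois number. The point of the corollary is simply to record the explicit form of the moments at the parameters singled out in the sentence preceding it, so the argument is a verification rather than a discovery.

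First I would check that the hypotheses of Proposition~\ref{Al-Car} are met: since $q$ is a prime power we have $0 < 1/q < 1$, so $p = 1/q$ satisfies $0<p<1$, and $a=1$ satisfies $a>0$. Thus the moment formula
$$
\mu_j = \sum_{k=0}^j \binom{j}{k}_{1/p} a^k
$$
applies. Setting $a=1$ deletes the factor $a^k$, leaving $\mu_j = \sum_{k=0}^j \binom{j}{k}_{1/p}$. Setting $p=1/q$ gives $1/p = q$, so that $\binom{j}{k}_{1/p} = \binom{j}{k}_q$ and hence $\mu_j = \sum_{k=0}^j \binom{j}{k}_q$.

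It then remains to identify this sum with $G_j$. By Theorem~\ref{GLmom} together with the remark defining the Galois numbers, $\sum_{k=0}^j \binom{j}{k}_q$ is exactly the number of subspaces of a $j$-dimensional vector space over $F_q$, namely $G_j$. This yields $\mu_j = G_j$ and completes the argument.

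I do not expect any genuine obstacle in this final step: the substantive work has already been carried out, in Proposition~\ref{Al-Car} (the moment computation) and in the identification, made in the sentence just before the corollary, of the limiting distribution of Theorem~\ref{RSGL} with the Al-Salam-Carlitz measure at $p=1/q$, $a=1$. Given those inputs, the corollary is a one-line substitution, and the only thing to be careful about is matching the base of the $q$-binomial coefficient correctly, i.e. remembering that the reciprocal parameter $1/p$ in Proposition~\ref{Al-Car} becomes $q$ under the choice $p=1/q$.
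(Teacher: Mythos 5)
Your proposal is correct and matches the paper's intent exactly: the corollary is stated without proof precisely because it is the immediate substitution $p=1/q$, $a=1$ into the moment formula of Proposition~\ref{Al-Car}, giving $\mu_j=\sum_{k=0}^j \binom{j}{k}_q=G_j$. Your care about the base $1/p=q$ of the $q$-binomial coefficient is the only point of substance, and you handle it correctly.
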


\begin{remark} Note the following limiting case
of the Al-Salam-Carlitz weight to the Poisson distribution
$$
\lim_{p\to 1} w_{AC}(p^{-k},(1-p)a;p)=e^{-a}\frac{a^k}{k!}.
$$
\end{remark}

To compute the moments in the other limiting cases
we use a $q$-Charlier polynomial \cite[Exer. 7.13, p. 202]{GaRa},
whose measure is also purely discrete.
\begin{prop}
\label{q-Char}
Let $0<p<1$ and $a>0.$ The $p$-Charlier polynomials $C_n(x;a,p)$ are orthogonal with respect to
the discrete probability measure which is supported on the sequence $p^{-k}$ with masses of
$$
w_{p-Char}(p^{-k};a,p)\ =\
\frac{1}{(-a;p)_\infty} \frac{p^{\binom{k}{2}}}{(p;p)_k}a^k, \quad k=0,1,\cdots, .
$$
The moments $\mu_j$ of this measure are given by \cite[\S7]{DSW}
$$
\mu_j=(-a/p;1/p)_j,
$$
\end{prop}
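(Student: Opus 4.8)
The plan is to treat Proposition~\ref{q-Char} as two separable tasks: the orthogonality of the $p$-Charlier polynomials $C_n(x;a,p)$ with respect to the stated measure is exactly the content of \cite[Exer.~7.13, p.~202]{GaRa}, so I would simply invoke that reference; the verifiable arithmetic content is that $w_{p-Char}$ really is a probability measure and that its $j$th moment equals $(-a/p;1/p)_j$. Both of these follow from a single identity, namely the limiting case of the $q$-binomial theorem already used in the proof of Proposition~\ref{genfunc}:
$$\sum_{k=0}^\infty \frac{p^{\binom{k}{2}}}{(p;p)_k}\, z^k = (-z;p)_\infty, \qquad |p|<1.$$

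First I would check normalization. Summing the masses and applying the displayed identity with $z=a$ gives $\sum_{k\ge 0} w_{p-Char}(p^{-k};a,p) = (-a;p)_\infty^{-1}(-a;p)_\infty = 1$, so the measure is indeed a probability measure. Next, since the support point is $p^{-k}$, the $j$th moment is
$$\mu_j = \sum_{k=0}^\infty (p^{-k})^j\, w_{p-Char}(p^{-k};a,p) = \frac{1}{(-a;p)_\infty}\sum_{k=0}^\infty \frac{p^{\binom{k}{2}}}{(p;p)_k}\,(a p^{-j})^k.$$
Applying the same identity a second time, now with $z = a p^{-j}$, collapses the inner sum and yields $\mu_j = (-a p^{-j};p)_\infty / (-a;p)_\infty$. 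The final step is a telescoping of infinite products:
$$\frac{(-a p^{-j};p)_\infty}{(-a;p)_\infty} = \frac{\prod_{m\ge 0}(1+a p^{m-j})}{\prod_{m\ge 0}(1+a p^{m})} = \prod_{l=1}^{j}(1+a p^{-l}) = (-a/p;1/p)_j,$$
where the middle equality uses that all factors with $m\ge j$ in the numerator cancel against the denominator, and the last equality is just the definition of the $q$-shifted factorial with base $1/p$.

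I do not expect a genuine obstacle here: for $0<p<1$ every series in sight converges absolutely, so the rearrangement and the termwise substitution $z\mapsto a p^{-j}$ are justified, and the product manipulation is a finite shift of indices. The only point requiring a little care is the bookkeeping in the last display — matching $\prod_{m\ge 0}(1+ap^{m-j})/\prod_{m\ge 0}(1+ap^{m})$ with $\prod_{l=1}^{j}(1+ap^{-l})$ and then recognizing this finite product as $(-a/p;1/p)_j$ — which is exactly the moment formula attributed to \cite[\S7]{DSW}. Thus the proposition reduces to two applications of a single known $q$-series identity together with an index shift.
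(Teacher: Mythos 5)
Your argument is correct, and it is worth noting that the paper itself offers no proof of this proposition at all: the orthogonality is attributed to \cite[Exer.~7.13, p.~202]{GaRa} and the moment formula to \cite[\S7]{DSW}, with nothing verified in the text. What you supply is a self-contained check of the two parts that can be verified by elementary means --- that the masses sum to $1$ and that $\mu_j=(-a/p;1/p)_j$ --- both via Euler's identity $\sum_{k\ge 0}p^{\binom{k}{2}}z^k/(p;p)_k=(-z;p)_\infty$, which is indeed the same limiting case of the $q$-binomial theorem the authors invoke in the proof of Proposition~\ref{genfunc}. Your telescoping of $(-ap^{-j};p)_\infty/(-a;p)_\infty$ into $\prod_{l=1}^j(1+ap^{-l})=(-a/p;1/p)_j$ is correct, and convergence is unproblematic since for $0<p<1$ the series defines an entire function of $z$. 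The one thing you rightly do not attempt to prove is the orthogonality of the polynomials $C_n(x;a,p)$ themselves, which you delegate to \cite{GaRa} exactly as the paper does; since the rest of the paper only ever uses the normalization and the moments, your verification covers everything that is actually needed downstream, and in that sense it is more informative than the paper's bare citation.
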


\begin{remark} Note the following limiting case
of the $q$-Charlier weight to the Poisson distribution
$$
\lim_{p\to 1} w_{p-Char}(p^{-k},(1-p)a;p)=e^{-a}\frac{a^k}{k!}.
$$
\end{remark}

The remaining limiting cases of \S\ref{main} are
\begin{enumerate}
\item Theorem~\ref{Umom}, proven from Theorem~\ref{RSU}(2) and Proposition~\ref{q-Char} with $p=1/q^2$, $a=1/q$,
\item Theorem~\ref{Spmom}, proven from Theorem~\ref{RSSp}(3) and Proposition~\ref{q-Char} with $p=1/q$, $a=1/q$,
\item Theorem~\ref{OddOmom}, proven from Theorem~\ref{RSOodd}(3) and Proposition~\ref{q-Char} with $p=1/q$, $a=1$,
\item Theorems~\ref{evenO+} and \ref{evenO-}, proven from
Theorem~\ref{RSOeven}(3) and Proposition~\ref{q-Char} with $p=1/q$, $a=1.$
\end{enumerate}

\begin{remark}
The moments for the Charlier polynomial measure are
$$
\mu_j^{Char}=\sum_{k=1}^j S(j,k)a^k,
$$
where $S(j,k)$ are Stirling numbers
of the second kind. They count set partitions of $\{1,2,\cdots,j\}$
by the number of blocks $k$ and refine the Bell numbers.

If the Al-Salam-Carlitz polynomials are rescaled so that the measure is located at
$(q^{-k}-1)/(1-q)$, which has a limiting value of $k$ as $q\to 1,$
and $a$ is replaced by $(1-q)a,$ the moments are given by
$q$-Stirling numbers (see \cite[(3.2)]{DSW})
$$
\mu_j^{AC}=q^{-j}\sum_{k=1}^j  q^{k}S_{1/q}(j,k) a^k.
$$

If the $q$-Charlier polynomials are rescaled so that the measure is located at
$(q^{-k}-1)/(1-q)$, which has a limiting value of $k$ as $q\to 1,$
and $a$ is replaced by $(1-q)a,$
the moments are given by $q$-Stirling numbers \cite[(7.4)]{DSW}
$$
\mu_j^{q-Char}=q^{-j} \sum_{k=1}^j  q^{-\binom{k}{2}}S_{1/q}(j,k) a^k.
$$
\end{remark}

All values of $n$ may be considered simultaneously by choosing a random $n$.
We use the fact that the Al-Salam-Carlitz weight satisfies
$$
F_k(a,X,X;P)=w_{AC}(p^{-k};aX;p), \quad P=1/p.
$$

\begin{theorem} Suppose that a non-negative integer $n$ is chosen with probability $(1-a)a^n.$
Then the probability that a random element of a random $GL(n,q)$
has $q^k$ fixed vectors is given by the probability measure in Proposition~\ref{Al-Car} with $p=1/q.$
\end{theorem}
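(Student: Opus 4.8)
The plan is to condition on the value of $n$ and reduce the whole computation to the generating function of Proposition~\ref{genfunc}. Fix $k\ge 0$. Since an element of $GL(n,q)$ can have a $k$-dimensional fixed space only when $n\ge k$, the probability in question is
$$\sum_{n=k}^\infty (1-a)a^n\, P\bigl(\text{a random element of }GL(n,q)\text{ has a }k\text{-dimensional fixed space}\bigr).$$
First I would rewrite the inner probability using part~1 of Theorem~\ref{RSGL} together with the identity $|GL(i,q)|=(-1)^iq^{\binom{i}{2}}(q;q)_i$ from Section~\ref{identities}. After this substitution the factors $(-1)^i q^{\binom{i}{2}}$ cancel against $|GL(i,q)|$, and the inner probability collapses to the clean form
$$\frac{(-1)^k}{q^{\binom{k}{2}}(q;q)_k}\sum_{i=0}^{n-k}\frac{q^{-ik}}{(q;q)_i}.$$

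The key observation is that this is exactly the quantity $D_{nk}$ of Proposition~\ref{genfunc} specialized to $P=q$, $X=1$, $Y=1$: with these choices one has $(-X)^k=(-1)^k$ and $Y^i=1$, so the two expressions agree term by term. Consequently the full mixture over $n$ is
$$(1-a)\sum_{n=k}^\infty D_{nk}\,a^n = F_k(a,1,1;q),$$
the value of the generating function $F_k(a,X,Y;P)$ at $X=Y=1$, $P=q$. The standing hypotheses $0<a<1$ and $|P|=q>1$ of Proposition~\ref{genfunc} are precisely what guarantee that the weights $(1-a)a^n$ form a probability distribution on the non-negative integers and that the infinite sum converges.

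It then remains only to read off the closed form. Proposition~\ref{genfunc} with $X=Y=1$ and $p=1/q=P^{-1}$, using $q^{-k^2}=p^{k^2}$, yields
$$F_k(a,1,1;q) = (ap;p)_\infty\,\frac{p^{k^2}a^k}{(p;p)_k\,(ap;p)_k},$$
which is exactly the Al-Salam-Carlitz weight $w_{AC}(p^{-k};a;p)$ of Proposition~\ref{Al-Car} at $p=1/q$; this is the content of the remark $F_k(a,X,X;P)=w_{AC}(p^{-k};aX;p)$ specialized to $X=1$. I do not expect a genuine obstacle here, since the argument is essentially parameter-matching; the one place that demands care is the bookkeeping in the first step, namely checking that the signs and powers of $q$ coming from $|GL(i,q)|$ and $|GL(k,q)|$ combine to give $D_{nk}$ at $X=Y=1$ rather than at some other specialization. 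As a consistency check one may sum over $k$ and invoke the $q$-Gauss evaluation $\sum_{k\ge 0}F_k(a,1,1;q)=1$ from the remark following Proposition~\ref{genfunc}, confirming that the resulting masses genuinely define a probability distribution.
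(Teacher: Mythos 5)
Your proposal is correct and follows essentially the same route as the paper: the paper's proof consists precisely of the identity $\sum_{n\ge 0}P(Z_n=q^k)a^n(1-a)=F_k(a,1,1;q)=w_{AC}(p^{-k};a;p)$ with $p=1/q$, which is exactly your computation via Proposition~\ref{genfunc} and the matching $F_k(a,X,X;P)=w_{AC}(p^{-k};aX;p)$. Your bookkeeping identifying the Rudvalis--Shinoda probability with $D_{nk}$ at $P=q$, $X=Y=1$ is accurate, and merely spells out what the paper leaves implicit.
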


\begin{proof} We need
$$
\sum_{n=0}^\infty P(Z_n=q^k)a^n(1-a)=F_k(a,1,1;q)=
w_{AC}(p^{-k};a;p), \quad p=1/q.
$$
This result also appears in \cite{F1}.
\end{proof}

For the unitary groups, note that the proof of Theorem~\ref{Umom} used the values
$D_n(Q,-Q^{2j},-1)$, $Q=-q.$ So we need
$$
\begin{aligned}
\sum_{n=0}^\infty P(Z_n=q^{2k})a^n(1-a)&=F_k(a,-1,-1;Q)\\
&=
w_{AC}(p^{-k};-a;p), \quad p=1/Q=-1/q.
\end{aligned}
$$

\begin{theorem} Suppose that a non-negative integer $n$ is chosen with probability $(1-a)a^n.$
Then the probability that a random element of a random $U(n,q)$
has $q^{2k}$ fixed vectors is given by the probability measure in
Proposition~\ref{Al-Car} with $p=-1/q$ and
$a$ replaced by $-a.$
\end{theorem}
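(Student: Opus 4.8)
The plan is to mirror, line for line, the proof of the preceding $GL(n,q)$ theorem; indeed the decisive chain $\sum_{n=0}^\infty P(Z_n=q^{2k})a^n(1-a)=F_k(a,-1,-1;Q)=w_{AC}(p^{-k};-a;p)$ with $Q=-q$ and $p=1/Q=-1/q$ has already been recorded in the display immediately preceding the statement, so the task is to justify each of its three equalities and then read off the conclusion.

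First I would identify the per-$k$ probability with a single term $D_{nk}$ of the generating function in Proposition~\ref{genfunc}. Starting from Theorem~\ref{RSU}(1) and substituting $|U(m,q)|=Q^{\binom{m}{2}}(Q;Q)_m$ (with $Q=-q$) for both $m=k$ and $m=i$, the factors $Q^{\binom{i}{2}}$ cancel and the leftover signs collapse, giving
\[
P(Z_n=q^{2k})=\frac{1}{Q^{\binom{k}{2}}(Q;Q)_k}\sum_{i=0}^{n-k}\frac{(-1)^iQ^{-ik}}{(Q;Q)_i}=D_{nk}(Q,-1,-1),
\]
since taking $X=-1$ makes $(-X)^k=1$ while taking $Y=-1$ supplies the factor $(-1)^i$. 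This is exactly the $k$-th summand of the double sum $D_n(Q,-1,-1)$ that already appeared (carrying the extra weight $Q^{2jk}$) in the proof of Theorem~\ref{Umom}.

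Next I would average against the geometric law. Because $P(Z_n=q^{2k})=D_{nk}=0$ for $n<k$, the sum $\sum_{n=0}^\infty P(Z_n=q^{2k})a^n(1-a)$ equals $(1-a)\sum_{n=k}^\infty D_{nk}(Q,-1,-1)a^n$, which is precisely $F_k(a,-1,-1;Q)$ in the notation of Proposition~\ref{genfunc}; the hypotheses $0<a<1$ and $|Q|=q>1$ guarantee convergence. Finally, applying the Al-Salam-Carlitz evaluation $F_k(a,X,X;P)=w_{AC}(p^{-k};aX;p)$ with $P=1/p$, specialized to $X=-1$ and $P=Q=-q$, yields $p=-1/q$ and $aX=-a$, so the averaged mass at $q^{2k}$ equals $w_{AC}(p^{-k};-a;p)$ with $p=-1/q$. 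This is the weight of Proposition~\ref{Al-Car} with $p=-1/q$ and $a$ replaced by $-a$, as asserted.

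The main obstacle is bookkeeping rather than conceptual: one must track the signs introduced by $Q=-q$ carefully in the first step, and one should flag that $p=-1/q$ falls outside the range $0<p<1$ of Proposition~\ref{Al-Car}. The averaged family is nonetheless a genuine probability distribution—each $P(Z_n=q^{2k})$ is nonnegative and the weights $(1-a)a^n$ are positive and sum to $1$—so the content of the theorem is the exact formal agreement of these masses with the Al-Salam-Carlitz weight formula evaluated at the stated parameters.
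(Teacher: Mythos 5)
Your proposal is correct and follows essentially the same route as the paper, which records precisely the chain $\sum_{n}P(Z_n=q^{2k})a^n(1-a)=F_k(a,-1,-1;Q)=w_{AC}(p^{-k};-a;p)$ with $Q=-q$, $p=1/Q$ in the display preceding the theorem and leaves the details (the identification of $P(Z_n=q^{2k})$ with the summand $D_{nk}(Q,-1,-1)$, the application of Proposition~\ref{genfunc}, and the specialization $F_k(a,X,X;P)=w_{AC}(p^{-k};aX;p)$) to the reader. Your added remark that $p=-1/q$ lies outside the stated range $0<p<1$ of Proposition~\ref{Al-Car}, so the identification is a formal one, is a fair and worthwhile caveat that the paper does not make explicit.
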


For the symplectic groups, there are two cases in
Theorem~\ref{RSSp}, depending upon the fixed point space being even or odd
dimensional. Curiously, these two cases may be combined into a single example of
the Al-Salam-Carlitz weight. This is analogous to the association scheme
of symmetric matrices, where adjacent ranks are combined depending on the parity,
see \cite{Eg}.

\begin{theorem} Suppose that a non-negative integer $n$ is chosen with probability $(1-a)a^n.$
Then the probability that a random element of a random $Sp(2n,q)$
has $q^{2k}$ or $q^{2k+1}$ fixed vectors is given by the probability measure in
Proposition~\ref{Al-Car} with $p=1/q^2$ and $a$ replaced by $a/q.$
\end{theorem}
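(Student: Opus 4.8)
The plan is to pass to the $a$-generating function of the combined probabilities and to recognize it, via Proposition~\ref{genfunc} together with the diagonal relation $F_k(a,X,X;P)=w_{AC}(p^{-k};aX;p)$ (valid for $P=1/p$), as the required Al-Salam-Carlitz weight. Set $R=q^2$. The $j=0$ computation inside the proof of Theorem~\ref{Spmom} already identifies the two parity probabilities as terms of double sums: the probability that a random element of $Sp(2n,q)$ has a $2k$-dimensional fixed space is the $k$th term $D_{nk}$ of $D_n(R,q^{-1},q)$, and the probability of a $(2k+1)$-dimensional fixed space is the $k$th term of $\tfrac1q D_{n-1}(R,q^{-3},q^{-1})$. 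In the notation of Proposition~\ref{genfunc} these summands carry parameters $(P,X,Y)=(q^2,q^{-1},q)$ and $(q^2,q^{-3},q^{-1})$ respectively.

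First I would multiply each probability by the weight $(1-a)a^n$ and sum over $n$, noting that the hypotheses $0<a<1$ and $|P|=q^2>1$ of Proposition~\ref{genfunc} are met (and the arguments $aYP^{-1-k}$ have modulus below $1$, so the series converge). The even contribution is immediately $F_k(a,q^{-1},q;q^2)$. For the odd contribution the $n$th summand is $\tfrac1q D_{n-1,k}$, so after the reindexing $n\mapsto n+1$ the geometric factor supplies one extra power of $a$, yielding $\tfrac{a}{q}F_k(a,q^{-3},q^{-1};q^2)$. Hence the probability that the element has $q^{2k}$ or $q^{2k+1}$ fixed vectors has $a$-generating function
$$
G_k(a)=F_k(a,q^{-1},q;q^2)+\frac{a}{q}\,F_k(a,q^{-3},q^{-1};q^2).
$$

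The substantive step, and the only genuine obstacle, is to show that $G_k(a)$ collapses to a single diagonal evaluation. By the diagonal relation with $P=q^2$, $p=1/q^2$, $X=q^{-1}$ one has $w_{AC}(q^{2k};a/q;1/q^2)=F_k(a,q^{-1},q^{-1};q^2)$, so the theorem amounts to the identity
$$
F_k(a,q^{-1},q;q^2)+\frac{a}{q}\,F_k(a,q^{-3},q^{-1};q^2)=F_k(a,q^{-1},q^{-1};q^2).
$$
What makes this delicate is that neither summand on the left is itself of the diagonal form $F_k(a,X,X;P)$, so this is exactly the point at which the even and odd parities must fuse; this is the phenomenon flagged in the statement. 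I would prove it by inserting the explicit product of Proposition~\ref{genfunc}. Writing $b=a/q$ and $u=q^{-2}$, all three terms share the factor $u^{k^2}b^k/(u;u)_k$; dividing it out along with $(bu;u)_\infty$ and simplifying by means of $(b;u)_\infty=(1-b)(bu;u)_\infty$, $(b;u)_k=(1-b)(bu;u)_{k-1}$, and $(bu;u)_k=(1-bu^k)(bu;u)_{k-1}$ reduces the claim to the trivial $(1-bu^k)+bu^k=1$. This gives $G_k(a)=w_{AC}(q^{2k};a/q;1/q^2)$, which is the probability measure of Proposition~\ref{Al-Car} with $p=1/q^2$ and $a$ replaced by $a/q$, completing the proof.
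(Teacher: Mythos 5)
Your proposal is correct and follows the paper's proof essentially verbatim: both identify the two parity probabilities with the summands $D_{nk}$ of $D_n(q^2,q^{-1},q)$ and $\tfrac1q D_{n-1}(q^2,q^{-3},q^{-1})$, pass to the generating function $F_k(a,q^{-1},q;q^2)+\tfrac{a}{q}F_k(a,q^{-3},q^{-1};q^2)$ via Proposition~\ref{genfunc}, and combine the two explicit products into the single Al-Salam-Carlitz weight $w_{AC}(q^{2k};a/q;1/q^2)$. The only difference is that you spell out the telescoping algebra (reducing to $(1-bu^k)+bu^k=1$) that the paper leaves implicit, and you phrase the target as the diagonal evaluation $F_k(a,q^{-1},q^{-1};q^2)$ — a harmless repackaging of the same computation.
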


\begin{proof} The proof of Theorem~\ref{Spmom} used two terms for $j=0$,
$$
D_n(R, q^{-1},q)+ \frac{1}{q} D_{n-1}(R,q^{-3},q^{-1}).
$$
So we need for $P=R=q^2$,
$$
\begin{aligned}
\sum_{n=0}^\infty &\left( P(Z_n=q^{2k})+  P(Z_n=q^{2k+1})\right)
a^n(1-a)\\
&=  F_k(a, q^{-1},q;P)+\frac{a}{q}F_k(a,q^{-3},q^{-1};P)\\
&=
(aq^{-1}P^{-1};P^{-1})_\infty
\frac{(aq^{-1})^k P^{-k^2}}{(P^{-1};P^{-1})_k (aq^{-1}P^{-1};P^{-1})_k}\\
&= w_{AC}(P^k;aq^{-1};P^{-1}).
\end{aligned}
$$
\end{proof}

\section{Acknowledgements} Fulman was partially supported by NSA grant H98230-13-1-0219.
Stanton was partially supported by NSF grant DMS-1148634.
The authors thank Persi Diaconis for helpful discussions.

\end{document}